\newtheorem{Theorem}{Theorem}[section]
\newtheorem{Lemma}[Theorem]{Lemma}
\newtheorem{Remark}[Theorem]{Remark}
\newtheorem{Hypothesis}{Hypothesis}
\numberwithin{equation}{section}
\begin{document}

\def\le{\left}
\def\r{\right}
\def\cost{\mbox{const}}
\def\a{\alpha}
\def\d{\delta}
\def\ph{\varphi}
\def\e{\epsilon}
\def\la{\lambda}
\def\si{\sigma}
\def\La{\Lambda}
\def\B{{\cal B}}
\def\A{{\mathcal A}}
\def\L{{\mathcal L}}
\def\O{{\mathcal O}}
\def\bO{\overline{{\mathcal O}}}
\def\F{{\mathcal F}}
\def\K{{\mathcal K}}
\def\H{{\mathcal H}}
\def\D{{\mathcal D}}
\def\C{{\mathcal C}}
\def\M{{\mathcal M}}
\def\N{{\mathcal N}}
\def\G{{\mathcal G}}
\def\T{{\mathcal T}}
\def\R{{\mathbb R}}
\def\I{{\mathcal I}}

\def\bw{\overline{W}}
\def\phin{\|\varphi\|_{0}}
\def\s0t{\sup_{t \in [0,T]}}
\def\lt{\lim_{t\rightarrow 0}}
\def\iot{\int_{0}^{t}}
\def\ioi{\int_0^{+\infty}}
\def\ds{\displaystyle}
\def\pag{\vfill\eject}
\def\fine{\par\vfill\supereject\end}
\def\acapo{\hfill\break}

\def\beq{\begin{equation}}
\def\eeq{\end{equation}}
\def\barr{\begin{array}}
\def\earr{\end{array}}
\def\vs{\vspace{.1mm}   \\}
\def\rd{\reals\,^{d}}
\def\rn{\reals\,^{n}}
\def\rr{\reals\,^{r}}
\def\bD{\overline{{\mathcal D}}}
\newcommand{\dimo}{\hfill \break {\bf Proof - }}
\newcommand{\nat}{\mathbb N}
\newcommand{\E}{\mathbb E}
\newcommand{\Pro}{\mathbb P}
\newcommand{\com}{{\scriptstyle \circ}}
\newcommand{\reals}{\mathbb R}

\newcommand{\red}[1]{\textcolor{red}{#1}}

\def\Amu{{A_\mu}}
\def\Qmu{{Q_\mu}}
\def\Smu{{S_\mu}}
\def\H{{\mathcal{H}}}
\def\Im{{\textnormal{Im }}}
\def\Tr{{\textnormal{Tr}}}
\def\E{{\mathbb{E}}}
\def\P{{\mathbb{P}}}
\def\span{{\textnormal{span}}}
\title{Large deviations for the dynamic $\Phi^{2n}_d$ model}
\author{Sandra Cerrai\thanks{Partially supported by the NSF grant DMS 1407615.}\\
\normalsize University of Maryland, \\
United States
\and
Arnaud Debussche\thanks{Partially supported by the French government thanks to the
ANR program Stosymap and the ``Investissements d'Avenir" program ANR-11-LABX-0020-01. }\\
\normalsize IRMAR, ENS Rennes, CNRS, UBL, \\ France}
\date{}

\date{}

\maketitle

\begin{abstract}
We are dealing with the validity of a large deviation principle for  a class of reaction-diffusion equations with polynomial non-linearity, perturbed by a Gaussian random forcing. We are here interested in the regime where  both the strength of the noise and its correlation are vanishing, on a length scale $\e$ and $\d(\e)$, respectively, with $0<\e,\d(\e)<<1$. We prove that, under the assumption that $\e$ and $\d(\e)$ satisfy a suitable scaling limit, a large deviation principle holds in the space of continuous trajectories with values both  in the space of square-integrable functions and in Sobolev spaces of negative exponent. Our result is valid, without any restriction on the degree of the polynomial nor on the space dimension.
\end{abstract}

\section{Introduction}
\label{sec1}

We are  dealing here with the equation
\begin{equation}
\label{eq}
\begin{cases}
\ds{\partial_t u(t,\xi)=\Delta u(t,\xi)+f(u(t,\xi))+\sqrt{\e}\, \xi^\d(t,\xi),\ \ \ t>0,\ \ \ \ \ \ \xi \in\,D,}\\
\vs
\ds{u(0,\xi)=x(\xi),\ \ \ \xi \in\,D,\ \ \ \ \ \ \ u(t,\xi)=0,\ \ \ t\geq 0,\ \ \ \ \ \xi \in\,\partial D,}
\end{cases}
\end{equation}
defined in a bounded smooth domain $D\subset \mathbb{R}^d$, with $d\geq 1$.
The nonlinearity $f$ is given by the polynomial
\[f(r)=-r^{2n+1}+\la_1\,r+\la_2,\ \ \ \ r \in\,\mathbb{R},\]
for some $n \in\,\nat$ and $\la_1, \la_2 \in\,\mathbb{R}$.
The forcing term $\xi^\d(t,\xi)$ is a zero mean space-time Gaussian noise, white in time and colored in space, with correlation of order $\d$, and  $\e>0$ is the parameter that measures the intensity of the noise. 

If $\d>0$, then,  by using classical arguments in the theory of SPDEs, it is possible to show that, for every fixed $\e>0$, equation \eqref{eq} is globally well posed (for a proof, see e.g. \cite[Theorem 7.19]{dpz1}).
On the other hand, if the space dimension $d$ is bigger than $1$, and  the Gaussian noise is white, both in time and in space, (that is $\d=0$)   the well-posedness of equation \eqref{eq} is a problem  and a proper renormalization of the non-linear term $f$
is required. In case of 
space dimension $d=2$, this renormalization is realized through the
Wick ordering (to this purpose, see \cite{DPD}, \cite{JLM} and \cite{MW}). In case $d=3$ and $f$ is a polynomial of degree $3$, the proof of the well-posedness of the problem requires a considerably more complicated renormalization of the non-linearity (see \cite{H}, and also \cite{MW1} for the global well-posedness). Nothing of what we have mentioned applies in dimension $d=4$ and higher.

Here, we are  interested in the validity of a large deviation principle for equation \eqref{eq}, when both $\e$ and $\d$ go to zero. In \cite{cf11} it has been studied this problem when first $\e\to 0$ and then $\d\to 0$, in the  case $f$ is a Lipschitz-continuous nonlinearity, without any restriction on the dimension. It has been proved that the action functional $I^\d_T$, that describes the large deviation principle for the family $\{u_{\d}^\e\}_{\e>0}$ in the space $C([0,T];L^2(D))$, is $\Gamma$-convergent, as $\d\downarrow 0$,  to the functional
\begin{equation}
\label{ult}
I_T(u)=\frac 12 \int_0^T|\partial_t u(t)-\Delta u(t)-f(u(t))|^2_{L^2(D)}\,dt.\end{equation}
The functional $I_T$ corresponds to the large deviation  action functional for equation \eqref{eq}, in case of space-time white noise, when well-posedness is a challenge. In particular, the $\Gamma$-convergence of $I^\d_T$ to $I_T$ has allowed to obtain the  converge of the quasi-potential and, as a consequence, the approximation of the expected exit times and exit places from suitable functional domains by the solution of equation \eqref{eq}.

In \cite{HW},  Hairer and Weber have studied the large deviation principle for equation \eqref{eq}, with $f(r)=-r^3+\la_1 r$, in dimension $d=2, 3$, under the assumption that $\d=\d(\e)$. By using the recently developed theory of regularity structures, they have proved the validity of a large deviation principle for the family of random variables $\{u_{\e}\}_{\e>0}$, where $u_\e=u^\e_{\d(\e)}$,   in  case 
\begin{equation}
\label{cond}
\lim_{\e\to 0} \d(\e)=0.\end{equation}
 Actually, they have  proved that if, in addition to \eqref{cond}, the following conditions hold
\begin{equation}
\label{ult3}
\lim_{\e\to 0}\e\,\log\d(\e)^{-1}=\rho \in\,[0,\infty),\ \ \ \text{for}\ d=2, \ \ \ \ \ \ \ \lim_{\e\to 0}\e\,\d(\e)^{-1}=\rho \in\,[0,\infty),\ \ \ \text{for}\ d=3,\end{equation}
then the family $\{u_{\e}\}_{\e>0}$ satisfies a large deviation principle in $C([0,T],C^\eta(D))$, where $C^\eta(D)$ is some space of functions of negative regularity in space, with respect to the action functional
\[I_T^\rho(u)=\frac 12\int_0^T|\partial _t u-\Delta u+c_\rho\,u+u^3|_{L^2(D)}^2\,dt.\]
Here $c_\rho$ is some explicitly given constant, depending on $\rho$ and $d$, and  such that $c_0=-\la_1$.

In \cite{HW}, Hairer and Weber have also considered the renormalized equation 
\[\left\{
\begin{array}{l}
\ds{\partial _t u(t,\xi)=\Delta u(t,\xi)+(c+3\,\e\,c_{\d(\e)}^{(1)}-9\,\e^2\,c_{\d(\e)}^{(2)})\,u(t,\xi)-u^3(t,\xi)+\sqrt{\e}\, \xi^{\d(\e)}(t,\xi),}\\
\vs
\ds{u(0,\xi)=u_0(\xi),\ \ \ \xi \in\,D,}
\end{array}\r.\]
where $c_{\d(\e)}^{(1)}$ and $c_{\d(\e)}^{(2)}$ are the constants that  arise from the renormalization procedure. They have proved that if in this case \eqref{cond} holds, then the family of solutions $\{u_{\e}\}_{\e>0}$ satisfies a large deviation principle in $C([0,T],C^\eta(D))$,  with action functional $I^0_T$.

Hairer and Weber's proof of the large deviation
principle relies strongly on the understanding of the renormalized equation even for
the schemes without renormalization. In particular, in \cite{HW} they claim that it is not clear whether a large
deviations principle holds in higher dimensions, even in the regime $\e<<\d(\e)^{d-2}$. 

In the present paper, by using the so called {\em weak convergence approach} to large deviations (see \cite{BDM}), we extend Hairer and Weber's result to polynomials $f$ of any degree and to any space dimension $d\geq 2$. Actually, we prove that  the family of solutions $\{u_{\e}\}_{\e>0}$ of equation \eqref{eq} satisfies a large deviation principle in $C([0,T];H^{-s}(D))$, for every $s>0$, with respect to the action functional $I_T$ defined in \eqref{ult}, under the assumption that $\d=\d(\e)$ satisfies condition \eqref{cond} and (in case of periodic boundary conditions)
\[\lim_{\e\to 0} \e\,\log \d(\e)^{-1}=0,\ \ \ \ \ \text{if}\ d=2,\]
and
\begin{equation}
\label{ult2}
\lim_{\e\to 0} \e\,\d(\e)^{-(d-2)}=0,\ \ \ \ \ \text{if}\ d\geq 3.\end{equation}   

Moreover, we prove the validity of a large deviation principle in $C([0,T];L^2(D))$, with respect to the same action functional $I_T$, under the more restrictive assumption that
\begin{equation}
\label{ult1000}
\lim_{\e\to 0} \e\,\d(\e)^{-\eta}=0,
\end{equation}
for some $\eta>d-2$.
In fact, in the present paper we  consider Dirichlet boundary conditions in a  general smooth bounded domain $D$ and, in this case,   scalings   \eqref{ult2} and \eqref{ult1000} become slightly different (see Theorem \ref{teo5.1} for the precise statement).

\section{Notations}
\label{sec2}
Let $D$ be a bounded domain in $\mathbb{R}^d$, having smooth boundary. In what follows, we shall denote by $H$ the Hilbert space $L^2(D)$, endowed with the usual scalar product 
\[\langle x,y\rangle_H=\int_D x(\xi) y(\xi)\,d\xi,\] 
 and the corresponding  norm $|\cdot|_H$. Moreover, we shall denote by $E$ the Banach space $C(\bar{D})$, endowed with the supremum norm 
 \[|x|_E=\sup_{x \in\,\bar{D}}|x(\xi)|,\]
 and the duality $\langle \cdot,\cdot\rangle _{E,E^\star}$.
 For any $p \in\,[1,\infty]\setminus \{2\}$, the norms in $L^p(D)$  will be  denoted by $|\cdot|_p$ and the duality between $L^p(D)$ and $L^q(D)$, with $p^{-1}+q^{-1}=1$, will be denoted by $\langle \cdot,\cdot\rangle_{p,q}$.
  
 Next, for any $x \in\,E$, we denote
\[M_x=\le\{\,\xi \in\,\bar{D}\,:\, |x(\xi)|=|x|_E\,\r\}.\]
Moreover, if $x\neq 0$, we set
\[\mathcal{M}_x=\le\{\,\d_{x,\xi} \in\,E^\star\,;\,\xi \in\,M_x\,\r\},\]
where $\d_{x,\xi}$ is the element of the dual $E^\star$ defined by
\[\langle \d_{x,\xi},y\rangle_{E,E^\star}=\frac{x(\xi)y(\xi)}{|x|_{E}},\ \ \ \ \ y \in\,E.\]
For $x=0$, we set
\[\mathcal{M}_0=\le\{\,h \in\,E^\star\,:\ |h|_{E^\star}=1\,\r\}.\]
Clearly, we have
\[\mathcal{M}_x\subseteq \partial |x|_E:=\le\{\,h
\in\,E^\star\,;\,|h|_{E^\star}=1,\
\le<h,x\r>_{E,E^\star}=|x|_E\,\r\},\]
for every $x \in\,E$, and, due to the characterization of $\partial |x|_E$, it is possible to show that if $\# M_x=1$, then $\mathcal{M}_x= \partial |x|_E$. In particular,    
if $u:[0,T]\to E$ is any differentiable mapping, then
\begin{equation}
\label{dersub}
\frac d{dt}^- |u(t)|_E\leq \le<u^\prime(t),\d\r>_{E,E^\star},
\end{equation}
for any $t \in\,[0,T]$ and $\d \in\,\mathcal{M}_{u(t)}$ (for all details we refer e.g. to \cite[Appendix D]{dpz1} and also to \cite[Appendix A]{tesi}).

\bigskip

   In what follows we shall denote by $A$ the realization 
in $H$ of the Laplace operator $\Delta$,
endowed with Dirichlet boundary conditions.
That is
\[D(A)=\le\{\,u \in\,W^{2,2}(D)\ :\ u(x)=0,\ x \in\,\partial D\,\r\},\ \ \ \ \ \ Au=\Delta u.\]
In fact, with the same arguments that we will use in the case of Dirichlet boundary conditions, we can also treat Neumann or periodic boundary conditions.

It is possible to check (see e.g. \cite{dav} for all details and
 proofs) that  $A$ is a non-positive and self-adjoint
operator in $H$, which generates an analytic semigroup $e^{tA}$ with
dense domain.
In \cite[Theorem 1.4.1]{dav} it is proved that  the space
$L^1(D)\cap L^\infty(D)$ is invariant under $e^{tA}$, so
that $e^{tA}$ may be extended to a non-negative one-parameter
contraction semigroup $T_{p}(t)$ on $L^p(D)$, for all $1\leq
p\leq \infty$. These semigroups are strongly continuous for $1\leq
p<\infty$ and are consistent, in the sense that
$T_{p}(t)u=T_{q}(t)u$, for all $u \in\,L^p(D)\cap
L^q(D)$. This is why we shall denote all $T_{p}(t)$ by
$e^{tA}$.  Finally, if we consider the part
of $A$ in the space of continuous functions $E$, it generates an
analytic semigroup which has no dense domain in general (it
clearly depends on the boundary conditions).

The semigroup $e^{tA}$ is compact on $L^p(D)$ for
all $1\leq p\leq \infty$ and $t>0$. The spectrum $\{-\a_{k}\}_{k
\in\,\nat}$ of $A$ is independent of $p$ and $e^{tA}$ is analytic
on $L^p(D)$, for all $1\leq p\leq \infty$. 
Moreover, there exists $c>0$ such that
\begin{equation}
\label{rd22}
c^{-1}\, k^{\frac 2d}\leq \a_k\leq c\, k^{\frac 2d},\ \ \ \ k \in\,\mathbb{N}.
\end{equation}

In what follows, for every $s>0$, we denote by $H^{-s}(D)$ the closure of $H$ with respect to the norm
\[|x|_{H^{-s}(D)}^2=\sum_{k=1}^\infty x_k^2 \,\a_k^{-s}.\]

Concerning the
complete orthonormal system of eigenfunctions $\{e_{k}\}_{k
\in\,\nat}$, in case $D=[0,L]^d$, we have $\sup_{k \in\,\mathbb{N}}|e_k|_\infty<\infty$. In case of a general bounded  domain $D$ in $\mathbb{R}^d$, with $d>1$, having a smooth boundary, we have that there exists some $c=c(D)>0$ such that
\[|e_k|_\infty \leq c\,\a_k^{\frac{d-1}4},\ \ \ \ \ k \in\,\mathbb{N}\]
(for a proof see \cite{greiser}, where the estimate above is proved for $d$-dimensional compact manifold with boundary). In particular, due to \eqref{rd22}, we have
\[|e_k|_\infty \leq c\,k^{\frac{d-1}{2d}},\ \ \ \ \ k \in\,\mathbb{N}.\]
Thus, in what follows, we will assume the following condition
\begin{Hypothesis}
\label{H1}
There exist $\a=\a(d)\geq 0$ and $c>0$ such that
\begin{equation}
\label{rd1}
|e_k|^2_\infty \leq c\,k^{\frac{\a}d} ,\ \ \ \ k \in\,\mathbb{N}.\end{equation}
\end{Hypothesis}

 \bigskip  
   
Now,   for every $x:D\to \mathbb{R}$ we shall denote
\[F(x)(\xi)=f(x(\xi)),\ \ \ \ \xi \in\,D,\]
where 
\[f(r)=-r^{2n+1}+\la_1\,r+\la_2,\ \ \ \ r \in\,\mathbb{R},\]
for some $n \in\,\nat$ and $\la_1, \la_2 \in\,\mathbb{R}$. It is immediate to check that $F$ maps $E$ into $E$ continuously, is locally Lipschitz continuous   and 
\[|F(x)|_E\leq c\le(|x|_E^{2n+1}+1\r),\ \ \ \ \ x \in\,E.\] Moreover, for every $x, y \in\, E$ and $\d \in\,\mathcal{M}_{x-y}$, we have
\begin{equation}
\label{dissE}
\langle F(x)-F(y),\d\rangle_{E,E^\star}\leq c\,|x-y|_E,\ \ \ \ \ x,y \in\,E.
\end{equation}
It is also possible to check that, if 
 we  denote
\[p_n=2(n+1),\ \ \ \ q_n=\frac{2(n+1)}{2n+1},\]
then $F$ maps $L^{p_n}(D)$ into $L^{q_n}(D)$ and
for every $x, y \in\,L^{p_n}(D)$ we have
\[\begin{array}{l}
\ds{|F(x)-F(y)|_{q_n}^{q_n}\leq c\int_D |x(\xi)-y(\xi)|^{q_n}\le(|x(\xi)|^{2n}+|y(\xi)|^{2n}+1\r)^{q_n}\,d\xi}\\
\vs
\ds{\leq c\,|x-y|_{p_n}^{q_n}\le(|x|_{p_n}+|y|_{p_n}+1\r)^{2nq_n}.}
\end{array}\]
This implies that for every $x, y \in\,L^{p_n}(D)$
\begin{equation}
\label{rd10}
|F(x)-F(y)|_{q_n}\leq c\,|x-y|_{p_n}\le(|x|_{p_n}^{2n}+|y|_{p_n}^{2n}+1\r).
\end{equation}
In particular, we get
\begin{equation}
\label{fin4}
|F(x)|_{q_n}\leq c\le(|x|_{p_n}^{2n+1}+1\r),\ \ \ \  x \in\,L^{p_n}(D).\end{equation}
Moreover, there exists some constant $c>0$ such that for every $r,s \in\,\mathbb{R}$
\[(f(r)-f(s))(r-s)\leq -c\,|r-s|^{p_n}+\la_1\,|r-s|^2,\]
and this implies that for every $x,y \in\,L^{p_n}(D)$, it holds
\begin{equation}
\label{rd3}
\langle F(x)-F(y),x-y\rangle_{q_n,p_n}\leq -c\,|x-y|_{p_n}^{p_n}+\la_1\,|x-y|_H^2.\end{equation}

In what follows, for every $N>0$ we shall define
\[f_N(r)=\le\{\begin{array}{ll}
\ds{f(r),}  & \ds{|r|\leq N,}\\
& \vs
\ds{f(N r/|r|),}  &  \ds{|r|>N.}
\end{array}\r.\]
and we shall denote by $F_N$ the composition operator associated with $f_N$. As $f_N:\mathbb{R}\to \mathbb{R}$ is Lipschitz continuous and bounded, the mapping $F_N:E\to E$ is Lipschitz-continuous and bounded. For every $M\geq N$, we have
\begin{equation}
\label{local}
|x|_E\leq N\Longrightarrow F_M(x)=F_N(x)=F(x).
\end{equation}
Moreover, it is possible to verify that for every $N>0$ and  $\d \in\,\mathcal{M}_{x-y}$
\begin{equation}
\label{dissEn}
\langle F_N(x)-F_N(y),\d_{x-y}\rangle_{E,E^\star}\leq c\,|x-y|_E,\ \ \ \ x,y \in\,E,
\end{equation}
for some constant $c$ independent of $N$.

\section{The model}
\label{sec3}

As we mentioned in the introduction, we are dealing here with the equation
\begin{equation}
\label{model}
\begin{cases}
\ds{\partial_t u(t,\xi)=\Delta u(t,\xi)+f(u(t,\xi))+\sqrt{\e}\,\partial_t w^\d(t,\xi),\ \ \ t>0,\ \ \ \ \ \ \xi \in\,D}\\
\vs
\ds{u(0,\xi)=x(\xi),\ \ \ \xi \in\,D,\ \ \ \ \ \ \ u(t,\xi)=0,\ \ \ t\geq 0,\ \ \ \ \ \xi \in\,\partial D.}
\end{cases}
\end{equation}
Concerning the random perturbation $w^\d(t)$, we assume that for every $\d>0$ it is a cylindrical Wiener process in $L^2(D)$, white in time and colored in space, with covariance
\[Q_\d=\le(I+\d\,\sqrt{-A}\r)^{-2\beta},\]
for some $\beta=\beta(d)\geq 0$, depending on the space dimension $d$. This means that $w^\d(t)$ can be represented as
\[w^\d(t)=\sum_{k=1}^\infty \la_k(\d) e_k\,\beta_k(t),\ \ \ \ \ t\geq 0,\]
where $\{\beta_k(t)\}_{k \in\,\mathbb{N}}$ is a sequence of independent standard Brownian motions defined on a stochastic basis $(\Omega,\mathcal{F}, \{\mathcal{F}_t\}_{t\geq 0},\mathbb{P})$, $\{e_k\}_{k \in\,\mathbb{N}}$ is the complete orthonormal systen of $L^2(D)$ that diagonalizes $A$ (see  Section \ref{sec2}) and
\begin{equation}
\label{rd31}
\la_k(\d)=\le(1+\d\,\sqrt{\a_k}\r)^{-\beta},\ \ \ \ \ k \in\,\mathbb{N}.\end{equation}

\begin{Hypothesis}
\label{H2}
For every $d>1$, we assume 
\begin{equation}
\label{rd35}
\beta=\beta(d)>\frac{d-2+\a}2,\end{equation}
where $\a=\a(d)$ is the non-negative constant introduced in Hypothesis \ref{H1}. 
\end{Hypothesis}

With the notation introduced in Section \ref{sec2}, for every $\e, \d>0$
equation \eqref{model} can be rewritten as the following abstract evolution equation
\begin{equation}
\label{abstract}
du(t)=\le[A u(t)+F(u(t))\r]\,dt+\sqrt{\e}\, dw^\d(t),\ \ \ \ u(0)=x.
\end{equation}
Due to Hypothesis \ref{H2}, for every $\e, \d>0$ the linear problem 
\[dz(t)=A z(t)\,dt+\sqrt{\e}\,dw^\d(t),\ \ \ \ z(0)=0,\]
 admits a unique mild solution $z^\e_\d$ belonging to $L^p(\Omega;C([0,T];E))$, for every $p\geq1$ and $T>0$.
Therefore, as proved in \cite[Theorem 7.19]{dpz1}, for any initial condition $x \in\,H$, equation \eqref{abstract} admits a unique mild solution $u_\d^\e \in\,L^p(\Omega;C([0,T];H)\cap L^{2(n+2)}((0,T)\times D))$, for every $p\geq 1$ and $T>0$.

\section{The skeleton equation}
\label{sec4}

We are here interested in the study of the well-posedness of the following deterministic problem
\begin{equation}
\label{skel}
\frac{du}{dt}(t)=A u(t)+F(u(t))+\varphi(t),\ \ \ \ u(0)=x,
\end{equation}
where the control  $\varphi$ is taken in $L^2(0,T;H)$ and the initial condition $x$ in $H$.

We recall that a function $u$ in $C([0,T];X)$ is a mild solution to equation \eqref{skel} if
\[u(t)=e^{t A}x+\int_0^t e^{(t-s)A} F(u(s))\,ds+\int_0^t e^{(t-s)A} \varphi(s)\,ds,\]
(here we denote by $X$ either $H$, or $E$, or $L^p(D)$, for $p\geq 1$).

\begin{Theorem}
\label{teo4.1}
For every $T>0$ and for every $x \in\,H$ and $\varphi \in\,L^2(0,T;H)$, there exists a unique mild solution $u^{x,\varphi}$  to equation \eqref{skel} in $C([0,T];H)\cap L^{p_n}((0,T)\times D)$.
Moreover
\begin{equation}
\label{rd20}
|u^{x,\varphi}|_{C([0,T];H)}+|u^{x,\varphi}|_{L^{p_n}((0,T)\times D)}\leq c_T\le(1+|x|_H+|\varphi|_{L^2(0,T;H)}\r).
\end{equation}

\end{Theorem}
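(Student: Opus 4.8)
The equation \eqref{skel} is a semilinear parabolic PDE with a dissipative, locally Lipschitz nonlinearity $F$ coming from the polynomial $f$, and a square-integrable forcing $\varphi$. The strategy is the classical one: prove local existence and uniqueness by a fixed-point argument using the smoothing of $e^{tA}$, then obtain global-in-time a priori estimates exploiting the dissipativity inequality \eqref{rd3}, and finally upgrade these estimates to the bound \eqref{rd20}. I would carry this out in the following steps.

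\textbf{Step 1: Local well-posedness.} First I would solve the equation with the truncated nonlinearity $F_N$, which is globally Lipschitz and bounded from $E$ to $E$. For the linear part, set $z^\varphi(t)=\int_0^t e^{(t-s)A}\varphi(s)\,ds$; since $\varphi\in L^2(0,T;H)$ and $e^{tA}$ is analytic, $z^\varphi\in C([0,T];H)$ with $|z^\varphi|_{C([0,T];H)}\le c_T|\varphi|_{L^2(0,T;H)}$ (this is the standard maximal-regularity / mild-solution estimate for the stochastic-convolution-type term, here deterministic). Then the map
\[
\Gamma(u)(t)=e^{tA}x+\int_0^t e^{(t-s)A}F_N(u(s))\,ds+z^\varphi(t)
\]
is a contraction on $C([0,\tau];H)$ for $\tau$ small enough, by Lipschitz continuity of $F_N$, giving a unique mild solution $u_N$ on $[0,T]$ (global, since $F_N$ is globally Lipschitz). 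The same argument, using that $F$ maps $L^{p_n}(D)$ into $L^{q_n}(D)$ with the estimate \eqref{rd10}, and that $e^{tA}$ maps $L^{q_n}(D)$ into $L^{p_n}(D)$ with an integrable singularity, upgrades $u_N$ to $C([0,T];H)\cap L^{p_n}((0,T)\times D)$.

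\textbf{Step 2: A priori estimate.} The heart of the proof is a uniform-in-$N$ bound on $u_N$ in $C([0,T];H)\cap L^{p_n}((0,T)\times D)$. Formally, testing the equation (with $F_N$) against $u_N$ gives
\[
\frac12\frac{d}{dt}|u_N(t)|_H^2=\langle Au_N(t),u_N(t)\rangle+\langle F_N(u_N(t)),u_N(t)\rangle+\langle\varphi(t),u_N(t)\rangle.
\]
The first term is $\le 0$ (indeed $\le -\alpha_1|u_N(t)|_H^2$). For the nonlinear term I would use a truncated version of \eqref{rd3} with $y=0$: from $(f_N(r)-f_N(0))r\le -c|r|^{p_n}+\la_1 r^2 + $ (lower order), which holds with constants independent of $N$ by construction of $f_N$ and the dissipativity of $f$, one gets $\langle F_N(u_N),u_N\rangle\le -c|u_N|_{p_n}^{p_n}+\la_1|u_N|_H^2+c$. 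The forcing term is handled by Cauchy–Schwarz: $\langle\varphi,u_N\rangle\le|\varphi|_H|u_N|_H\le\frac12|\varphi|_H^2+\frac12|u_N|_H^2$. Combining, applying Gronwall's lemma on $[0,T]$, and then integrating the surviving $-c|u_N(t)|_{p_n}^{p_n}$ term in time yields
\[
\sup_{t\in[0,T]}|u_N(t)|_H^2+\int_0^T|u_N(t)|_{p_n}^{p_n}\,dt\le c_T\bigl(1+|x|_H^2+|\varphi|_{L^2(0,T;H)}^2\bigr),
\]
with $c_T$ independent of $N$. Because the formal computation above is not a priori justified for mild solutions, I would make it rigorous either by a Galerkin/spectral approximation in $H$ (projecting onto $\span\{e_1,\dots,e_m\}$, deriving the energy identity for the ODE system, passing to the limit) or by a Yosida-type regularization $A_\lambda$ of $A$; this is routine but is the step requiring the most care.

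\textbf{Step 3: Removing the truncation and uniqueness.} The a priori bound gives $\sup_{t\in[0,T]}|u_N(t)|_H\le R$ for some $R=R(T,|x|_H,|\varphi|_{L^2})$ independent of $N$. This does not immediately control $|u_N|_E$, so I cannot directly invoke \eqref{local}. Instead I would bootstrap: having $u_N$ bounded in $C([0,T];H)\cap L^{p_n}((0,T)\times D)$ uniformly, use the mild formulation and smoothing estimates for $e^{tA}$ from $L^{q_n}$ (or $L^2$) into $E$ — valid because $d\ge 1$ and the singularity $t^{-d/(2q_n)}$ (resp.\ $t^{-d/4}$) remains integrable for the relevant exponents, possibly after one or two iterations of the Duhamel formula using \eqref{fin4} — to conclude $\sup_t|u_N(t)|_E\le M$ with $M$ independent of $N$. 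Then choosing $N>M$ and invoking \eqref{local} shows $F_N(u_N)=F(u_N)$, so $u:=u_N$ solves \eqref{skel}, establishing existence together with \eqref{rd20}. For uniqueness: if $u_1,u_2$ are two mild solutions in $C([0,T];H)\cap L^{p_n}((0,T)\times D)$, set $v=u_1-u_2$; testing against $v$ and using the two-variable dissipativity \eqref{rd3} gives $\frac12\frac{d}{dt}|v(t)|_H^2\le -c|v(t)|_{p_n}^{p_n}+\la_1|v(t)|_H^2$ (all other terms cancel), and Gronwall with $v(0)=0$ forces $v\equiv 0$; again this energy estimate for the difference is justified by the same approximation procedure as in Step 2.

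\textbf{Main obstacle.} The genuinely delicate point is Step 2 combined with the $E$-valued bootstrap in Step 3: one must extract an $N$-independent bound in $C([0,T];H)$ — which is clean, via dissipativity — and then propagate it to an $N$-independent sup-norm bound so that the truncation can be lifted, all while justifying the energy identity rigorously for mild solutions (not just for strong/Galerkin ones). In arbitrary dimension $d$ and for polynomial $f$ of arbitrary degree $2n+1$, keeping track of the exponents $p_n,q_n$ and checking that the parabolic smoothing singularities remain integrable at each Duhamel iteration is where the bookkeeping concentrates; everything else is standard semilinear-parabolic machinery.
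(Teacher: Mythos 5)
Your Steps 1 and 2 are sound and the energy estimate in Step 2 is essentially the same dissipativity computation the paper relies on (it applies \eqref{rd3} to differences of solutions rather than to the solution itself, but the mechanism is identical). The problem is Step 3: the bootstrap from a uniform bound in $C([0,T];H)\cap L^{p_n}((0,T)\times D)$ to a uniform bound $\sup_t|u_N(t)|_E\le M$ fails for $d\ge 2$, which is precisely the regime of interest here. With $\varphi$ only in $L^2(0,T;H)$, the forced term $\int_0^t e^{(t-s)A}\varphi(s)\,ds$ requires the singularity $(t-s)^{-d/4}$ paired with $|\varphi(s)|_H$ to be controlled via Cauchy--Schwarz, i.e.\ $\int_0^t (t-s)^{-d/2}\,ds<\infty$, which holds only for $d<2$; equivalently, this term lands in $C^{1/2-\gamma}([0,T];D((-A)^\gamma))$ with $\gamma<1/2$, and $D((-A)^\gamma)\not\hookrightarrow L^\infty(D)$ for $d\ge 2$. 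The nonlinear Duhamel term fares no better, since $e^{tA}:L^{q_n}\to L^\infty$ costs $t^{-d/(2q_n)}$ with $q_n$ close to $1$. The solution $u^{x,\varphi}$ is genuinely \emph{not} in $E$ for rough $\varphi$ when $d\ge 2$, so no amount of iteration will produce the sup-norm bound you need to invoke \eqref{local}, and your route to lifting the truncation collapses.

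The paper circumvents this by reversing the order of the two approximations. It first takes \emph{smooth} data $x\in E$, $\varphi$ with values in $E$: there the truncated solution $u_N$ is $E$-valued, and the subdifferential inequality \eqref{dersub} together with \eqref{dissEn} gives $|u_N(t)|_E\le c_T(|x|_E+|\varphi|_{C([0,T];E)}+\la_2)$ with a right-hand side independent of $N$ (though of course depending on the $E$-norms of the data), so the truncation is removed via \eqref{local} at that stage. Then, for general $x\in H$ and $\varphi\in L^2(0,T;H)$, it approximates by smooth data $(x_k,\varphi_k)$ and uses the $L^2$-contraction estimate coming from \eqref{rd3},
\begin{equation*}
|u^{x_k,\varphi_k}(t)-u^{x_h,\varphi_h}(t)|_H^2+\int_0^t|u^{x_k,\varphi_k}-u^{x_h,\varphi_h}|_{p_n}^{p_n}\,ds\le c_T\bigl(|x_k-x_h|_H^2+|\varphi_k-\varphi_h|_{L^2(0,T;H)}^2\bigr),
\end{equation*}
to obtain a Cauchy sequence in $C([0,T];H)\cap L^{p_n}((0,T)\times D)$, and finally passes to the limit in the mild formulation using \eqref{rd10}. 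The key point is that the $E$-bounds are only ever needed along the approximating sequence, where they may blow up as $k\to\infty$; what must be uniform is only the $H$-contraction, which dissipativity provides for free. To repair your proof you would need to adopt this two-stage structure (or an equivalent Galerkin scheme in which the truncation is never lifted for rough data directly); as written, Step 3 contains an unfixable gap in dimension $d\ge 2$.
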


\begin{proof}
 For every $N>0$, we introduce the approximating problem
\begin{equation}
\label{skel-n}
\frac{du}{dt}(t)=A u(t)+F_N(u(t))+\varphi(t),\ \ \ \ u(0)=x.
\end{equation}
As $F_N:E\to E$ is Lipschitz continuous, if $x \in\,E$ and $\varphi \in\,L^2(0,T;E)$  there exists a unique mild solution $u_N\in\,C([0,T];E)$. In case we want to emphasize the dependence of $u_N$  on the initial condition  $x$ and the control $\varphi$, we will denote it by $u_N^{x,\varphi}$.

Now, according to \eqref{dersub} and \eqref{dissEn}, for every $\d(t,N) \in\,\mathcal{M}_{u_N(t)}$ we have
 \[\begin{array}{l}
 \ds{\frac{d}{dt}^-|u_N(t)|_E\leq \langle A u_N(t),\d(t,N)\rangle_{E, E^\star}+\langle F_N( u_N(t))-F_N(0),\d(t,N)\rangle_{E, E^\star}}\\
 \vs
 \ds{+\langle F_N(0),\d(t,N)\rangle_{E, E^\star}+\langle \varphi(t),\d(t,N)\rangle_{E, E^\star}\leq c\,|u_N(t)|_E+|\varphi(t)|_E+\la_2,}\\
  \end{array}
 \]
so that
 \[|u_N(t)|_E\leq c_T\le(|x|_E+|\varphi|_{C([0,T];E)}+\la_2\r),\ \ \ \ t \in\,[0,T].\]
According to \eqref{local}, this means in particular that if we fix 
\[\bar{N}>c_T\le(|x|_E+|\varphi|_{C([0,T];E)}+\la_2\r),\] and define
\[u^{x,\varphi}(t)=u_{\bar{N}}^{x,\varphi}(t),\ \ \ \ t \in\,[0,T],\]
the function $u^{x,\varphi}$ is a mild solution to  problem \eqref{skel}. Moreover, $u^{x,\varphi}$ is the unique mild solution. Actually, if $v_1$ and $v_2$ are two mild solutions in $C([0,T];E)$ and $\rho=v_1-v_2$, due to \eqref{dissE}, for every $\d \in\,\mathcal{M}_{\rho(t)}$ we have
\[\begin{array}{l}
 \ds{\frac{d}{dt}^-|\rho(t)|_E\leq \langle A \rho(t),\d\rangle_{E, E^\star}+\langle F( v_1(t))-F(v_2(t)),\d\rangle_{E, E^\star}\leq c\,|\rho(t)|_E,}
\end{array}
 \]
and, as $\rho(0)=0$, we can conclude that $v_1(t)-v_2(t)=\rho(t)=0$, for every $t \in\,[0,T]$.
 
Now, if $x \in\,H$ and $\varphi \in\,L^2(0,T;H)$, let $\{x_k\}_{k \in\,\nat}\subset E$ and $ \{\varphi_k\}_{k \in\,\nat}\subset L^2(0,T;E)$ be two sequences such that
\begin{equation}
\label{rd5}
\lim_{k\to\infty}|x_k-x|_H+|\varphi_k-\varphi|_{L^2(0,T;H)}=0.\end{equation}
If we fix $k,h \in\,\mathbb{N}$ and define $\rho:=u^{x_k,\varphi_k}-u^{x_h,\varphi_h}$, we have that $\rho$ is a mild solution to the problem 
\[\frac{d\rho}{dt}(t)=A\rho(t)+\le[F(u^{x_k,\varphi_k}(t))-F(u^{x_h,\varphi_h}(t))\r]+\le[\varphi_k(t)-\varphi_h(t)\r],\ \ \ \ \rho(0)=x_k-x_h.\]
Therefore, due to \eqref{rd3}, we have
\[\begin{array}{l}
\ds{\frac 12 \frac d{dt}|\rho(t)|_H^2\leq \langle A\rho(t),\rho(t)\rangle_H+\langle F(u^{x_k,\varphi_k}(t))-F(u^{x_h,\varphi_h}(t)),\rho(t)\rangle_{q_n, p_n}}\\
\vs
\ds{+\langle \varphi_k(t)-\varphi_h(t),\rho(t)\rangle_H\leq -c\,|\rho(t)|_{p_n}^{p_n}+c\,|\rho(t)|_H^2+|\varphi_k(t)-\varphi_h(t)|_H^2.}
\end{array}\]
This implies that
\begin{equation}
\label{rd11}
|\rho(t)|^2_H+\int_0^t|\rho(s)|_{p_n}^{p_n}\,ds\leq c_T\le(|x_k-x_h|^2_H+|\varphi_k-\varphi_h|^2_{L^2(0,T;H)}\r).\end{equation}
In particular, due to \eqref{rd5}, we have
\[\lim_{k, h\to\infty} |u^{x_k,\varphi_k}-u^{x_h,\varphi_h}|_{C([0,T];H)}+|u^{x_k,\varphi_k}-u^{x_h,\varphi_h}|_{L^{p_n}((0,T)\times D)}=0,\]
so that the sequence $\{u^{x_k,\varphi_k}\}_{k \in\,\mathbb{N}}$ converges in $C([0,T];H)\cap L^{p_n}((0,T)\times D)$ to some $u^{x,\varphi}$, that satisfies estimate \eqref{rd20}. 

Thus, in order to conclude the proof of the present theorem, we have to show that $u^{x,\varphi} $ is a mild solution to equation \eqref{skel}.
 For every $k \in\,\mathbb{N}$, we have 
\begin{equation}
\label{rd14}
 u^{x_k,\varphi_k}(t)=e^{ t A}x_k+\int_0^t e^{(t-s)A} F(u^{x_k,\varphi_k}(s))\,ds+\int_0^t e^{(t-s)A} \varphi_k(s)\,ds.\end{equation}
According to \eqref{rd10}, we have
\[\begin{array}{l}
\ds{\le|\int_0^t e^{(t-s) A}\le[F(u^{x,\varphi}(s))-F(u^{x_k,\varphi_k}(s))\r]\,ds\r|_{q_n}}\\
\vs
\ds{\leq c\int_0^t |F(u^{x,\varphi}(s))-F(u^{x_k,\varphi_k}(s))|_{q_n}\,ds}\\
\vs
\ds{\leq c\int_0^t |u^{x,\varphi}(s)-u^{x_k,\varphi_k}(s)|_{p_n}\le(|u^{x,\varphi}(s)|^{2n}_{p_n}+
|u^{x_k,\varphi_k}(s)|_{p_n}^{2n}+1\r)\,ds,}
\end{array}\]
Therefore, since
 both $u^{x,\varphi}$ and $u^{x_k,\varphi_k}$ satisfy estimate \eqref{rd20}, we get
\begin{equation}
\label{rd13}
\begin{array}{l}
\ds{\int_0^T\le|\int_0^t e^{(t-s) A}\le[F(u^{x,\varphi}(s))-F(u^{x_k,\varphi_k}(s))\r]\,ds\r|_{q_n}\,dt}\\
\vs
\ds{\leq \le(\int_0^T|u^{x,\varphi}(s)-u^{x_k,\varphi_k}(s)|_{p_n}^{p_n}\,ds\r)^{\frac 1{p_n}}\le(\int_0^T \le(|u^{x,\varphi}(s)|^{2nq_n}_{p_n}+
|u^{x_k,\varphi_k}(s)|_{p_n}^{2n q_n}\r)\,ds+1\r)^{\frac 1{q_n}}}\\
\vs
\ds{\leq c_T(x,\varphi)\,
|u^{x,\varphi}-u^{x_k,\varphi_k}|_{L^{p_n}((0,T)\times D)}.}
\end{array}\end{equation}

Moreover, since we have
\[\sup_{t\in\,[0,T]}\le|\int_0^t e^{(t-s)A}\le[\varphi(s)-\varphi_k(s)\r]\,ds\r|_H\leq c_T |\varphi-\varphi_k|_{L^2(0,T;H)},\]
and
\[\sup_{t\in\,[0,T]}\le|e^{tA}(x-x_k)\r|_H\leq |x-x_k|_H,\]
due to \eqref{rd5} and \eqref{rd13} we can take the limit, as $k\uparrow \infty$, in both sides of \eqref{rd14} with respect to the  $L^1(0,T;L^{q_n}(D))$-norm and we get that $u^{x,\varphi}$ satisfies the equation
\[ u^{x,\varphi}(t)=e^{ t A}x+\int_0^t e^{(t-s)A} F(u^{x,\varphi}(s))\,ds+\int_0^t e^{(t-s)A} \varphi(s)\,ds.\]  

Finally, as any solution $u^{x,\varphi}$ satisfies estimate \eqref{rd20}, uniqueness follows.  

\end{proof}

\section{The large deviation result}
\label{sec5}

In Section \ref{sec3} we have seen that for every $\e, \d>0$  and every initial condition $x \in\,H$, equation \eqref{abstract} admits a unique mild solution $u^\e_\d \in\,L^p(\Omega;C([0,T];H)\cap L^{2(n+1)}((0,T)\times D))$. Here and in what follows, we shall  assume that $\d=\d(\e)>0$, for every $\e>0$, with
\[\lim_{\e\to 0}\d(\e)=0.\] 
Our purpose is proving the validity of a large deviation principle in the space $C([0,T];H^{-s}(D))$, for $s>0$, and in the space $C([0,T];H)$, as $\e\to 0$,  for the family of random variables $\{u_\e\}_{\e>0}$, where $u_\e=u^\e_{\d(\e)}$, for every $\e>0$. If we want to emphasize the dependence of $u_\e$ from its initial condition, we  denote it by $u^x_\e$.

\begin{Theorem}
\label{teo5.1}
Let Hypotheses \ref{H1} and \ref{H2} be satisfied and assume that 
\[\lim_{\e\to 0} \d(\e)=0.\]
If 
 \begin{equation}
\label{rd46}
\lim_{\e\to 0}\,\e\,\La(\d(\e))=0,\end{equation}
where
\[\La(\d):=\begin{cases} 
\log \d^{-1},  &  \text{if}\  \a=0 \ \text{and}\ d=2,\\
\vs
\d^{-(d-2+\a)},  &  \text{otherwise,}
\end{cases}\]
then,  for every initial condition $x \in\,H$ and for every $s>0$, the family of random variables $\{u^x_\e\}_{\e>0}$ satisfies a large deviation principle in $C([0,T];H^{-s}(D))$, with action functional
\begin{equation}
\label{rd51}
I_T(u)=\frac 12 \int_0^T\le|u^\prime(t)-Au(t)-F(u(t))\r|_H^2\,dt.
\end{equation}
Moreover, if there exists $\gamma>d-2+\a$ such that
\begin{equation}
\label{rd5050}
\lim_{\e\to 0} \e\,\d(\e)^{-\gamma}=0,
\end{equation}
then the family  $\{u^x_\e\}_{\e>0}$ satisfies a large deviation principle in $C([0,T];H)$, with respect to the same action functional $I_T$.
\end{Theorem}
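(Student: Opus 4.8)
The plan is to use the weak convergence approach to large deviations of Budhiraja--Dupuis--Maroulis. By the variational representation for functionals of the Wiener process, it suffices to prove two things: (i) \emph{compactness} of the level sets of $I_T$, i.e. for every $R>0$ the set $\{u^{x,\varphi}:\ |\varphi|_{L^2(0,T;H)}\le R\}$ is relatively compact in the relevant topology (either $C([0,T];H^{-s}(D))$ or $C([0,T];H)$), together with the identification of $I_T$ as the correct rate function via Theorem \ref{teo4.1}; and (ii) a \emph{Laplace principle lower/upper bound}, which reduces to the following convergence statement: if $\{\varphi_\e\}_{\e>0}$ is a family of controls with $\sup_\e |\varphi_\e|_{L^2(0,T;H)}\le R$ (a.s.) and $\varphi_\e\to\varphi$ weakly in $L^2(0,T;H)$, and if $v_\e$ denotes the mild solution of the controlled stochastic equation
\[
dv_\e(t)=\le[A v_\e(t)+F(v_\e(t))\r]dt+\varphi_\e(t)\,dt+\sqrt{\e}\,dw^{\d(\e)}(t),\qquad v_\e(0)=x,
\]
then $v_\e\to u^{x,\varphi}$ in probability, in $C([0,T];H^{-s}(D))$ under \eqref{rd46}, and in $C([0,T];H)$ under \eqref{rd5050}. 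The first step, compactness of level sets, is essentially a consequence of the a priori bound \eqref{rd20} together with parabolic smoothing (the $e^{tA}$ term gains regularity, so the image of an $L^2$-ball of controls lies in a set bounded in a space compactly embedded in $C([0,T];H^{-s})$, resp.\ $C([0,T];H)$ when one exploits the $L^{p_n}$ bound and interpolation); this part is routine and I would dispatch it quickly.

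The heart of the argument is step (ii). Here I would write $v_\e = z_\e + r_\e$, where $z_\e=z^\e_{\d(\e)}$ is the stochastic convolution solving the linear equation $dz_\e=Az_\e\,dt+\sqrt\e\,dw^{\d(\e)}$, and $r_\e$ solves a random PDE with \emph{no noise}:
\[
\frac{dr_\e}{dt}=A r_\e + F(r_\e+z_\e) + \varphi_\e,\qquad r_\e(0)=x.
\]
The key quantitative input is a sharp estimate on the size of $z_\e$ in the right norm. One computes $\E|z_\e(t)|_H^2 = \e\sum_k \la_k(\d(\e))^2 \int_0^t e^{-2\a_k s}\,ds \lesssim \e\sum_k \la_k(\d(\e))^2/\a_k$, and with $\la_k(\d)=(1+\d\sqrt{\a_k})^{-\beta}$ and $\a_k\asymp k^{2/d}$ one sees this sum behaves like $\e\,\d(\e)^{-(d-2)}$ for $d\ge3$ (and like $\e\log\d(\e)^{-1}$ for $d=2$); more generally, incorporating Hypothesis \ref{H1} and the sup-norm estimate \eqref{rd1}, the relevant quantity is $\e\,\La(\d(\e))$ for the $H^{-s}$-level estimates and $\e\,\d(\e)^{-\gamma}$ for the $E$-level (pointwise) estimates controlling $F(r_\e+z_\e)$. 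Thus under \eqref{rd46}, resp.\ \eqref{rd5050}, one gets $z_\e\to 0$ in probability in $C([0,T];H^{-s}(D))$, resp.\ in $C([0,T];E)$. Then I would compare $r_\e$ with $u^{x,\varphi}$: setting $\rho_\e = r_\e - u^{x,\varphi}$ (or directly $v_\e-u^{x,\varphi}$), test the equation for $\rho_\e$ against $\rho_\e$ (in $H$, using the dissipativity \eqref{rd3}) or against its subdifferential (in $E$, using \eqref{dissE}/\eqref{dissEn}), absorb the polynomial nonlinearity using the dissipative monotonicity of $f$, handle the weak convergence $\varphi_\e\rightharpoonup\varphi$ via the compactness of $e^{\cdot A}$ acting on the controls (so $\int_0^\cdot e^{(\cdot-s)A}(\varphi_\e-\varphi)(s)\,ds\to0$ strongly), and close with Gronwall. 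The cutoff functions $F_N$ and property \eqref{local} let one localize to bounded solutions so that the locally-Lipschitz nonlinearity is effectively globally Lipschitz on the relevant ball, exactly as in the proof of Theorem \ref{teo4.1}.

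The main obstacle is controlling the nonlinear term $F(r_\e+z_\e)$ when the space dimension is large: $z_\e$ is only small, not zero, and $F$ has degree $2n+1$, so $F(r_\e+z_\e)-F(r_\e)$ contains products such as $r_\e^{2n} z_\e$ which require estimating $z_\e$ in $L^\infty$ (hence the role of Hypothesis \ref{H1}, the eigenfunction sup-bound, and Hypothesis \ref{H2} guaranteeing $z_\e\in C([0,T];E)$), and one must track how the threshold on $\e\,\d(\e)^{-\#}$ degrades with the degree and dimension. The two regimes in the theorem reflect exactly this: in the $H^{-s}$ topology one can afford the weaker scaling \eqref{rd46} because one only needs $z_\e$ small in a negative Sobolev norm and uses the dissipativity \eqref{rd3} in $H$ to kill the top-order nonlinear contribution (the $-c|\rho_\e|_{p_n}^{p_n}$ term absorbs the bad cross terms after a Young inequality), whereas the $C([0,T];H)$ statement needs genuine pointwise smallness of $z_\e$ and hence the stronger condition \eqref{rd5050} with $\gamma>d-2+\a$. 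I would organize the write-up so that the deterministic comparison estimates (analogues of \eqref{rd11}) are proved first as lemmas for the controlled equation with a fixed small perturbation $z$, and then the probabilistic smallness of $z_\e$ is fed in at the end; the only genuinely delicate point is bookkeeping the exponents in the covariance sum to match precisely the function $\La(\d)$ and the threshold $d-2+\a$ claimed in the statement.
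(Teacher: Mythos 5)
Your overall strategy coincides with the paper's: the weak convergence approach of Budhiraja--Dupuis--Maroulas, the splitting of the controlled solution into the stochastic convolution $\sqrt{\e}\,z_{\d(\e)}$ plus a random PDE, and dissipativity estimates in the $L^{p_n}$--$L^{q_n}$ duality. However, there is a genuine gap in the step that actually produces the two scaling thresholds, and your explanation of why the two regimes differ does not hold up. You claim that under \eqref{rd46} one only needs $z_\e\to 0$ in $C([0,T];H^{-s}(D))$ and that the dissipative term $-c|\rho_\e|_{p_n}^{p_n}$ then absorbs the nonlinear cross terms. But the cross terms produced by $F(r_\e+z_\e)$, e.g. pairings of the form $\langle F(\cdot)-F(\cdot),\sqrt{\e}\,z_{\d(\e)}\rangle_{q_n,p_n}$, require $\sqrt{\e}\,z_{\d(\e)}$ to be controlled in $L^{p_n}(D)$, and no negative-Sobolev smallness can deliver that: $F$ is precisely the object that fails to act on $H^{-s}(D)$. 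What makes the argument work under the weak scaling \eqref{rd46} is that these cross terms occur under a time integral, so only the \emph{fixed-time} moments $\sup_{t}\mathbb{E}|z_{\d(\e)}(t)|^{p_n}_{p_n}\leq c\,\La(\d(\e))^{p_n/2}$ of Lemma \ref{LA.1} are needed, and $\e\,\La(\d(\e))\to 0$ is exactly \eqref{rd46}. This estimate is required in \emph{both} regimes and is independent of the topology in which convergence is sought.

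The actual source of the dichotomy is the additive term $\sqrt{\e}\,z_{\d(\e)}$ itself in the final decomposition of $u^{x,\varphi_\e}_\e-u^{x,\varphi}$: to measure it in $C([0,T];H)$ one needs $\mathbb{E}\sup_{t\leq T}|z_{\d(\e)}(t)|_H^2$, and the factorization method degrades the fixed-time exponent $d-2+\a$ to $d-2+\a+2\theta$ for some $\theta>0$, whence the strict inequality $\gamma>d-2+\a$ in \eqref{rd5050} (Lemma \ref{LA2}); in $C([0,T];H^{-s}(D))$ the loss $2\theta$ is compensated by the gain coming from the negative index $-s$ (Lemma \ref{LA3}), so \eqref{rd46} suffices. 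Your alternative explanation --- that the $C([0,T];H)$ statement requires pointwise ($E$-norm) smallness of $z_\e$ to handle products like $r_\e^{2n}z_\e$ --- points at estimates the paper never uses and that you do not supply; if you pursue that route you must prove an $\mathbb{E}\sup_t|z_{\d(\e)}(t)|_E^p$ bound with the claimed threshold, which is an additional Sobolev-embedding-plus-factorization argument. Finally, the comparison of $u^{y,Q_{\d(\e)}\varphi_\e}$ with $u^{y,\varphi}$ cannot be closed by a plain Gronwall inequality, since the controls converge only weakly and the energy identity leaves a term $\int_0^t\langle Q_{\d(\e)}\varphi_\e-\varphi,\rho\rangle_H\,ds$ that does not vanish without compactness; the paper shifts the initial datum to $y\in H^1_0(D)$, proves $H^1$--$D(A)$ a priori bounds, and identifies the limit by a compactness argument together with Lemma \ref{l5.5bis}, a step absent from your sketch.
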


As we have already done in our previous paper \cite{cd16}, where we have studied an analogous problem for the $2$-dimensional stochastic Navier-Stokes equation with periodic boundary conditions, we will prove Theorem \ref{teo5.1} by using the weak convergence approach to large deviations, as developed in \cite{BDM} in the case of SPDEs. To this purpose, we first introduce some notation and then we give two conditions that, in view of what proved in \cite{BDM}, imply the validity of the Laplace principle for the family $\{u_\e\}_{\e>0}$, with respect to the action functional $I_T$,  in the spaces $C([0,T];H^{-s}(D))$ and $C([0,T];H)$, depending on the different scaling conditions between $\e$ and $\d(\e)$ (see  \eqref{rd46} and \eqref{rd5050}). 

\medskip

In Theorem \ref{teo4.1} we have shown that, for every predictable process $\varphi(t)$ in $L^2(\Omega\times [0,T];H)$, the problem
\begin{equation}
\label{rd50}
\frac{du}{dt}(t)=Au(t)+F(u(t))+\varphi(t),\ \ \ \ \ u(0)=x,
\end{equation} 
admits a unique mild solution $u^{x, \varphi} \in\,C([0,T];H)\cap L^{2(n+1)}((0,T)\times D)$. 
By combining together the proof of Theorem \ref{teo4.1} with \cite[proof of Theorem 7.19]{dpz1}, it is possible to prove that for every fixed $\e>0$ the problem
\begin{equation}
\label{rd49}
du(t)=\le[Au(t)+F(u(t))+Q_{\d(\e)}\varphi(t)\r]\,dt+\sqrt{\e}\,dw^{\d(\e)}(t),\ \ \ \ \ u(0)=x,
\end{equation} 
admits a    unique mild solution  $u^{x, \varphi}_\e \in\,L^2(\Omega;C([0,T];H)\cap L^{2(n+1)}((0,T)\times D))$.

\begin{quotation}
\item[{\em Condition 1.}] If $I_T$ is the functional defined in \eqref{rd51}, the level sets $\le\{I_T(u)\leq r\r\}$ are compact in $C([0,T];H)$, for every $r\geq 0$.

\item[{\em Condition 2.}]
For every fixed $T>0$ and $\gamma>0$, let us define
\[\mathcal{A}_{T}^\gamma:=\le\{ u \in\,L^2(\Omega\times [0,T];H)\ \text{predictable}\,:\, \int_0^T|u(s)|_H^2\,ds\leq \gamma,\ \mathbb{P}-\text{a.s.}\r\}.\]
If the family $\{\varphi_\e\}_{\e>0}\subset \mathcal{A}_T^\gamma$  converges in distribution, as $\e\downarrow 0$, to some $\varphi \in\,\mathcal{A}_T^\gamma$, in the space $L^2(0,T;H)$, endowed with the weak topology, then the family $\{u^{x,\varphi_\e}_\e\}_{\e>0}$ converges in distribution to $u^{x,\varphi}$, as $\e\downarrow 0$, in the space $C([0,T];H^{-s}(D))$ or $C([0,T];H)$, depending if condition \eqref{rd46} or condition \eqref{rd5050} are satisfied, respectively.

\end{quotation}

As we already mentioned, in \cite{BDM} it is proved that if Condition 1 and Condition 2 hold, then the family of random variables $\{u_\e\}_{\e>0}$ satisfies a large deviation principle in the space $C([0,T];H)$, with respect to the action functional $I_T$ defined in \eqref{rd51}. This means that Theorem \ref{teo5.1} follows, once we prove that  Condition 1 and Condition 2 are both satisfied.

Condition 1 follows if we can prove that the mapping 
\[\varphi \in\,L^2(0,T;H)\mapsto u^\varphi \in\,C([0,T];H),\]
is continuous, when $L^2(0,T;H)$ is endowed with the weak topology and $C([0,T];H)$ is endowed with the strong topology.

As far as Condition 2 is concerned, we use the Skorohod theorem and rephrase such a condition in the following way.
Let $(\bar{\Omega}, \bar{\mathcal{F}},\bar{\mathbb{P}})$ be a probability space and let $\{\bar{w}^{\d(\e)}(t)\}_{t\geq 0}$ be a  Wiener process, with covariance $Q_{\d(\e)}$, defined on $(\bar{\Omega}, \bar{\mathcal{F}},\bar{\mathbb{P}})$ and corresponding to the filtration $\{\bar{\mathcal{F}}_t\}_{t\geq 0}$. Moreover, let $\{\bar{\varphi}_\e\}_{\e>0}$ and $\bar{\varphi}$ be  $\{\bar{\mathcal{F}}_t\}_{t\geq 0}$-predictable processes in $\mathcal{A}^\gamma_T$, such that  the distribution of $(\bar{\varphi}_{\e}, \bar{\varphi}, \bar{w}^{\,\d(\e)})$ coincides with the distribution of $(\varphi_\e,\varphi,w^{\,\d(\e)})$ and
 \[\lim_{\e\to 0}\bar{\varphi}_\e=\bar{\varphi}\ \ \ \text{weakly in } L^2(0,T;H),\ \ \ \ \bar{\mathbb{P}}-\text{a.s.}\]
 Then, if $\bar{u}^{\,\bar{\varphi}_\e}_\e$ is the solution of an equation analogous to \eqref{rd49}, with $\varphi_\e$ and $w^{\,\d(\e)}$ replaced respectively by $\bar{\varphi}_\e$ and $\bar{w}^{\,\d(\e)}$, we have that 
 \[\lim_{\e\to 0} |\bar{u}_\e^{\bar{\varphi}_\e}-\bar{u}^{\bar{\varphi}}|_{\mathcal{E}}=0,\ \ \ \ \ \mathbb{P}-\text{a.s.}\]
 where $\mathcal{E}=C([0,T];H^{-s}(D))$ if \eqref{rd46} holds and 
$\mathcal{E}=C([0,T];H)$ if \eqref{rd5050} holds.
In what follows, when proving the above statement, we will just forget about the $-$.

\subsection{Proof of Theorem \ref{teo5.1}}

In fact, we only need  to prove Condition 2, introduced above. Actually, we will see that Condition 1 follows from the same arguments, as a special case.

To this purpose, we fix a sequence $\{\varphi_\e\}_{\e>0} \subset \mathcal{A}^\gamma_T$ which is $\mathbb{P}$-a.s. convergent to some $\varphi \in\,\mathcal{A}^\gamma_T$, with respect to the weak topology of $L^2(0,T;H)$, and we denote by $u^{x, \varphi_\e}_\e$ the solution of  equation \eqref{rd49} starting from the initial condition $x \in\,H$. Our purpose is showing that, if $u^{x,\varphi}$ is the solution of equation \eqref{rd50}, then
 \begin{equation}
 \label{rd71}
\lim_{\e\to 0} \mathbb{E}\,\le|u^{\,{x, \varphi}_\e}_\e-{u}^{\,{x, \varphi}}\r|_{C([0,T];H^{-s}(D))}=0,
\end{equation}
or
\begin{equation}
 \label{rd71-bis}
 \lim_{\e\to 0} \mathbb{E}\,\le|u^{\,{x, \varphi}_\e}_\e-{u}^{\,{x, \varphi}}\r|_{C([0,T];H)}=0,
\end{equation}
depending on the different scaling conditions between $\e$ and $\d(\e)$ that we assume in Theorem \ref{teo5.1}.
In fact, to prove Condition 2, we would just need $\mathbb{P}$-almost sure convergence.

Before proving \eqref{rd71} or \eqref{rd71-bis}, we introduce some notation and prove a preliminary result.
For every $\varphi \in\,L^2(0,T;H)$, we  define
\[\Phi(\varphi)(t):=\int_0^t e^{(t-s)A}\varphi(s)\,ds.\]
As shown e.g. in \cite[Proposition A.1.]{dpz-erg}, 
for every $\gamma<1/2$ 
\begin{equation}
\label{rd55}
\Phi:L^2(0,T;H)\to C^{\frac 12-\gamma}([0,T];D((-A)^\gamma),\end{equation}
is a bounded linear operator. 
In particular, due to the   continuity of mapping \eqref{rd55} and to the compactness of the  embedding  
$C^{1/2-\gamma}([0,T];D((-A)^\gamma))\hookrightarrow C([0,T];H),$
if $\{\varphi_k\}_{k \in\,\mathbb{N}}$ is a bounded sequence in $L^2(0,T;H)$, weakly convergent to some $\varphi \in\,L^2(0,T;H)$,
we have
\begin{equation}
\label{rd56}
\lim_{k\to\infty}\le|\Phi(\varphi_k)-\Phi(\varphi)\r|_{C([0,T];H)}=0.\end{equation}
Next, for every $\e>0$ and $\varphi \in\,L^2(0,T;H)$, we  define
\[\Phi_\e(\varphi)(t):=\int_0^t e^{(t-s)A}Q_{\d(\e)}\varphi(s)\,ds=\Phi(Q_{\d(\e)}\varphi)(t).\]

\begin{Lemma}
\label{l5.5bis}
If  $\{\varphi_\e\}_{\e>0}$ is a family of processes in $\mathcal{A}_T^\gamma$ that converges almost surely, as $\e\downarrow 0$, to some $\varphi \in\,\mathcal{A}_T^\gamma$, in the space $L^2(0,T;H)$, endowed with the weak topology, then
\begin{equation}
\label{rd57bis}
\lim_{\e\to 0}\le|\Phi_\e(\varphi_\e)-\Phi(\varphi)\r|_{C([0,T];H)}=0,\ \ \ \ \mathbb{P}-\text{a.s}.
\end{equation}
\end{Lemma}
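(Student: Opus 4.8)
The plan is to split the difference $\Phi_\e(\varphi_\e)-\Phi(\varphi)$ into two pieces,
\[
\Phi_\e(\varphi_\e)-\Phi(\varphi)=\bigl[\Phi(Q_{\d(\e)}\varphi_\e)-\Phi(\varphi_\e)\bigr]+\bigl[\Phi(\varphi_\e)-\Phi(\varphi)\bigr],
\]
and to show that each term tends to $0$ in $C([0,T];H)$, $\P$-a.s. The second term is handled directly by \eqref{rd56}: on the full-measure set where $\varphi_\e\to\varphi$ weakly in $L^2(0,T;H)$, the sequence is also bounded in $L^2(0,T;H)$ (uniformly, by $\gamma$, since all the $\varphi_\e$ and $\varphi$ lie in $\mathcal A_T^\gamma$), so the compactness of the embedding $C^{1/2-\gamma}([0,T];D((-A)^\gamma))\hookrightarrow C([0,T];H)$ combined with the boundedness of $\Phi$ from \eqref{rd55} forces $|\Phi(\varphi_\e)-\Phi(\varphi)|_{C([0,T];H)}\to 0$.

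The first term is the genuine content of the lemma, and the point is that it goes to zero \emph{deterministically}, uniformly over the ball $\mathcal A_T^\gamma$, purely because $Q_{\d(\e)}\to I$ strongly as $\e\to 0$ while $\Phi$ has a smoothing/compactness property. Concretely, I would write, for $\psi\in L^2(0,T;H)$ with $|\psi|_{L^2(0,T;H)}^2\le\gamma$,
\[
\bigl|\Phi(Q_{\d(\e)}\psi)(t)-\Phi(\psi)(t)\bigr|_H
=\Bigl|\int_0^t e^{(t-s)A}(Q_{\d(\e)}-I)\psi(s)\,ds\Bigr|_H,
\]
and estimate the operator $\Phi\circ(Q_{\d(\e)}-I)$ on $L^2(0,T;H)$ with values in $C([0,T];H)$. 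Since $Q_{\d(\e)}-I=(I+\d(\e)\sqrt{-A})^{-2\beta}-I$ commutes with $e^{tA}$ and is diagonal in the basis $\{e_k\}$ with eigenvalues $(1+\d(\e)\sqrt{\a_k})^{-2\beta}-1$, which converge to $0$ for each fixed $k$ and are uniformly bounded by $1$, one can interpolate: fix $\gamma'\in(0,1/2)$; the map $\Phi$ is bounded from $L^2(0,T;H)$ into $C^{1/2-\gamma'}([0,T];D((-A)^{\gamma'}))$, so it suffices to bound the $D((-A)^{\gamma'})$-operator norm of $(-A)^{\gamma'}(Q_{\d(\e)}-I)$, or more simply to use $(-A)^{\gamma'}(Q_{\d(\e)}-I)\to 0$ in an appropriate sense. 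The cleanest route is: the operator norm of $(Q_{\d(\e)}-I)$ on $H$ is $1$, but on the scale $D((-A)^{-\gamma'})\to H$ (equivalently, $(-A)^{\gamma'}(Q_{\d(\e)}-I)(-A)^{-\gamma'}$... ) one gets a bound $\sup_k \bigl[(1+\d(\e)\sqrt{\a_k})^{-2\beta}-1\bigr]\,(\text{something small})$; but in fact the simplest argument avoids any gain and instead uses compactness directly, exactly mirroring \eqref{rd56}: I claim $Q_{\d(\e)}\psi_\e-\psi_\e\to 0$ weakly in $L^2(0,T;H)$ whenever $\psi_\e$ is bounded (test against a fixed $g$: $\langle (Q_{\d(\e)}-I)\psi_\e,g\rangle=\langle\psi_\e,(Q_{\d(\e)}-I)g\rangle\to 0$ since $(Q_{\d(\e)}-I)g\to 0$ strongly in $H$ and $\psi_\e$ is bounded), and then $\Phi(Q_{\d(\e)}\psi_\e)-\Phi(\psi_\e)=\Phi((Q_{\d(\e)}-I)\psi_\e)\to 0$ strongly in $C([0,T];H)$ by the same compact-embedding argument that gave \eqref{rd56}. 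This avoids all fractional-power bookkeeping.

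The main obstacle, then, is essentially bookkeeping rather than a deep difficulty: one must be careful that the weak convergence $\varphi_\e\to\varphi$ holds only $\P$-almost surely and that the boundedness $|\varphi_\e|_{L^2(0,T;H)}^2\le\gamma$ is a pathwise almost-sure bound, so the whole argument is run $\omega$ by $\omega$ on a fixed full-measure set; and one must check that the compactness argument applies to the perturbed sequence $(Q_{\d(\e)}-I)\varphi_\e$, which is legitimate because this sequence is still bounded in $L^2(0,T;H)$ (by $2\sqrt\gamma$) and still converges weakly to $0$. Assembling the two pieces by the triangle inequality gives \eqref{rd57bis}. I would present it in this order: (i) reduce to the two terms; (ii) dispatch $\Phi(\varphi_\e)-\Phi(\varphi)$ via \eqref{rd56}; (iii) show $(Q_{\d(\e)}-I)\varphi_\e\to 0$ weakly in $L^2(0,T;H)$ on the a.s.\ event, using boundedness plus $(Q_{\d(\e)}-I)g\to 0$ strongly for fixed $g$; (iv) apply the bounded map \eqref{rd55} and the compact embedding to conclude $\Phi((Q_{\d(\e)}-I)\varphi_\e)\to 0$ in $C([0,T];H)$; (v) combine.
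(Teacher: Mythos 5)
Your proof is correct and follows essentially the same route as the paper: the two ingredients are identical, namely the compactness statement \eqref{rd56} (bounded weakly convergent sequences are mapped by $\Phi$ to strongly convergent sequences in $C([0,T];H)$) and the strong convergence $Q_{\d(\e)}g\to g$ for fixed $g$. The only difference is cosmetic: the paper splits the difference as $\Phi(Q_{\d(\e)}(\varphi_\e-\varphi))+\Phi(Q_{\d(\e)}\varphi-\varphi)$, so the second piece is dispatched by the strong continuity of $\Phi:L^2(0,T;H)\to C([0,T];H)$ alone, whereas your splitting makes you verify the weak convergence $(Q_{\d(\e)}-I)\varphi_\e\rightharpoonup 0$ (which you do correctly via self-adjointness and the uniform bound in $\mathcal{A}_T^\gamma$) and invoke compactness a second time.
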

\begin{proof}
For every $\e>0$, we have
\begin{equation}
\label{rd70}
\Phi_\e(\varphi_\e)-\Phi(\varphi)=\Phi(Q_{\d(\e)}(\varphi_\e-\varphi))+\Phi(Q_{\d(\e)}\varphi-\varphi).\end{equation}
Since 
\[\lim_{\e\to 0} Q_{\d(\e)}(\varphi_\e-\varphi)=0,\ \ \ \ \text{weakly in}\ L^2(0,T;H),\]
with $Q_{\d(\e)}(\varphi_\e-\varphi) \in\,\mathcal{A}_T^\gamma$, and $Q_{\d(\e)}\varphi$ converges to $\varphi$ in $L^2(0,T;H)$, as $\e\to 0$, our lemma follows from \eqref{rd56} and from the continuity of the mapping $\Phi:L^2(0,T;H)\to C([0,T];H)$.
\end{proof}

Now, we can proceed with the proof of  \eqref{rd71} and \eqref{rd71-bis}. From now on, $x \in\,H$ is the fixed initial condition in the statement of Theorem \ref{teo5.1} and $y \in\,H^1_0(D)$ is some other initial condition to be determined later on.
For every   $\e>0$, we define
\begin{equation}
\label{fin7}\rho_{1}^{\e}(t):=u^{x,\varphi_\e}_\e(t)-u^{y,\varphi_\e}_\e(t),
\ \ \ \ t \in\,[0,T].\end{equation}
We have
\[\frac{d\rho_{1}^{\e}}{dt}(t)=A\rho_{1}^{\e}(t)+\le[ F(u^{x,\varphi_\e}_\e(t))-F(u^{y,\varphi_\e}_\e(t))\r],\ \ \ \ \rho_{1}^{\e}(0)=x-y,\]
so that, thanks to \eqref{rd3}, we get
\begin{equation}
\label{fin1}
|u^{x,\varphi_\e}_\e(t)-u^{y,\varphi_\e}_\e(t)|_H^2=|\rho_{1}^{\e}(t)|^2_H\leq e^{\la_1 t}|x-y|_H^2.\end{equation}
In the same way, if we define
\begin{equation}
\label{fin50}
\rho(t):=u^{y,\varphi}(t)-u^{x,\varphi}(t),\ \ \ \ \ t \in\,[0,T],
\end{equation}
we get
\begin{equation}
\label{fin2}
|u^{y,\varphi}(t)-u^{x,\varphi}(t)|_H^2=|\rho(t)|_H^2 \leq e^{\la_1 t}|x-y|_H^2.\end{equation}

Now, for every $\e>0$, we define
\begin{equation}
\label{fin8}
\vartheta_\e(t):=u^{y,\varphi_\e}_\e(t)-\sqrt{\e}z_{\d(\e)}(t),\ \ \ \ t \in\,[0,T],\end{equation}
where, for every $\d>0$, $z_\d(t)$ is the solution to problem \eqref{linear},  that is
\[z_{\d}(t)=\int_0^t e^{(t-s)A}\,dw^\d(s),\ \ \ \ t \in\,[0,T].\]
This means that
\[\frac{d\vartheta_\e}{dt}(t)=A \vartheta_\e(t)+F(\vartheta_\e(t)+\sqrt{\e}\,z_{\d(\e)}(t))+Q_{\d(\e)}\varphi_\e(t),\ \ \ \ \vartheta_\e(0)=y,\]
so that, thanks to \eqref{fin4} and \eqref{rd3}
\[\begin{array}{l}
\ds{\frac 12\,\frac d{dt} |\vartheta_\e(t)|^2_H+|\vartheta_\e(t)|^2_{H^1}=
\langle F(\vartheta_\e(t)+\sqrt{\e}\,z_{\d(\e)}(t))-F(\sqrt{\e}\,z_{\d(\e)}(t)),\vartheta_\e(t)\rangle _{q_n, p_n}}\\
\vs
\ds{+\langle F(\sqrt{\e}\,z_{\d(\e)}(t)),\vartheta_\e(t)\rangle _{q_n, p_n}+\langle Q_{\d(\e)}\varphi_\e(t),\vartheta_\e(t)\rangle_H}\\
\vs
\ds{\leq -c\,|\vartheta_\e(t)|_{p_n}^{p_n}+\la_1|\vartheta_\e(t)|_H^2+|F(\sqrt{\e}\,z_{\d(\e)}(t))|_{q_n}|\vartheta_\e(t)|_{p_n}+c\,|\varphi_\e(t)|_H\,|\vartheta_\e(t)|_H}\\
\vs
\ds{\leq -\frac c2 \,|\vartheta_\e(t)|_{p_n}^{p_n} + c\,|\sqrt{\e}\,z_{\d(\e)}(t))|_{p_n}^{ 2n+2}+c\,|\vartheta_\e(t)|_H^2+c\,|\varphi_\e(t)|_H^2.  }
\end{array}\]
As a consequence of the Gronwall Lemma, since $\varphi_\e \in\,\mathcal{A}^\gamma_T$, this implies
\[|\vartheta_\e(t)|^2_H+\int_0^t |\vartheta_\e(s)|^2_{H^1}\,ds+\int_0^t |\vartheta_\e(s)|_{p_n}^{p_n}\,ds\leq c_T\,\le(|y|_H^2+\int_0^t |\sqrt{\e}\,z_{\d(\e)}(s))|_{p_n}^{ 2n+2}\,ds+\gamma\r),\]
and then, due to \eqref{rd30}, we conclude that for every $\la>0$
\begin{equation}
\label{fin5}
\begin{array}{l}
\ds{
\mathbb{E}\le(\sup_{t \in\,[0,T]}|\vartheta_\e(t)|_H^2+\int_0^T |\vartheta_\e(s)|_{H^1(D)}^2\,ds+\int_0^t |\vartheta_\e(s)|_{p_n}^{p_n}\,ds\r)^\la}\\
\vs
\ds{\leq c_{\la}(T)\,\le(|y|_H^{2\la}+\gamma^\la\r)+c_{\la}(T)\le[\e\, \La(\d(\e))\r]^{\la(n+1)}.}
\end{array}
\end{equation}

Next, we define
\begin{equation}
\label{fin10}
\rho_2^\e(t)=\vartheta_\e(t)-u^{y,\hat{\varphi}_\e}(t),\ \ \ \ t \in\,[0,T],\end{equation}
where $\vartheta_\e(t)$ is the process defined in \eqref{fin8} and $
\hat{\varphi}_\e=Q_{\d(\e)}\varphi_\e$. We have that $\rho_2^\e(t)$ satisfies the equation
\[\frac{d \rho_2^\e}{dt}(t)=A\rho_2^\e(t)+F(\vartheta_\e(t)+\sqrt{\e}\,z_{\d(\e)}(t))-F(u^{y,\hat{\varphi}_\e}(t)),\ \ \ \ \rho_2^\e(0)=0,\]
and then
\[\begin{array}{l}
\ds{\frac 12\,\frac d{dt} |\rho_2^\e(t)|^2_H+|\rho_2^\e(t)|^2_{H^1}}\\
\vs
\ds{=
\langle F(\vartheta_\e(t)+\sqrt{\e}\,z_{\d(\e)}(t))-F(u^{y,\hat{\varphi}_\e}(t)),\rho_2^\e(t)+\sqrt{\e}\,z_{\d(\e)}(t)\rangle _{q_n,p_n}}\\
\vs
\ds{-\langle F(\vartheta_\e(t)+\sqrt{\e}\,z_{\d(\e)}(t))-F(u^{y,\hat{\varphi}_\e}(t)),\sqrt{\e}\,z_{\d(\e)}(t)\rangle _{q_n, p_n}.}\end{array}\]
According to \eqref{rd10} and \eqref{rd3}, this implies
\[\begin{array}{l}
\ds{\frac 12\,\frac d{dt} |\rho_2^\e(t)|^2_H+|\rho_2^\e(t)|^2_{H^1}\leq c\,|\rho_2^\e(t)|_H^2+c\,|\sqrt{\e}\,z_{\d(\e)}(t)|_H^2}\\
\vs
\ds{+\le(|\vartheta_\e(t)|_{p_n}^{2n+1}+|\sqrt{\e}\,z_{\d(\e)}(t)|_{p_n}^{2n+1}+|u^{y,\hat{\varphi}_\e}(t)|_{p_n}^{2n+1}\r)|\sqrt{\e}\,z_{\d(\e)}(t)|_{p_n},}
\end{array}\]
and from the Gronwall lemma we obtain
\[\begin{array}{l}
\ds{\sup_{t \in\,[0,T]}|\rho_2^\e(t)|^2_H\leq c_T\,\int_0^T|\sqrt{\e}\,z_{\d(\e)}(t)|_H^2\,dt}\\
\vs
\ds{+c_T\,\int_0^T\le(|\vartheta_\e(t)|_{p_n}^{2n+1}+|\sqrt{\e}\,z_{\d(\e)}(t)|_{p_n}^{2n+1}+|u^{y,\hat{\varphi}_\e}(t)|_{p_n}^{2n+1}\r)\,|\sqrt{\e}\,z_{\d(\e)}(t)|_{p_n}\,dt.}
\end{array}\]
By taking the expectation of both sides, this yields
\[\begin{array}{l}
\ds{\mathbb{E}\sup_{t \in\,[0,T]}|\rho_2^\e(t)|^2_H\leq c_T\,\sup_{t \in\,[0,T]}\mathbb{E}\,|\sqrt{\e}\,z_{\d(\e)}(t)|_H^2+c_T\,\sup_{t \in\,[0,T]}\le(\mathbb{E}\,|\sqrt{\e}\,z_{\d(\e)}(t)|_{p_n}^{p_n}\r)^{\frac 1{p_n}} }\\
\vs
\ds{\times  \le(\mathbb{E}\int_0^T |\vartheta_\e(t)|_{p_n}^{p_n}\,dt+\sup_{t \in\,[0,T]}\,\mathbb{E}\,|\sqrt{\e}\,z_{\d(\e)}(t)|_{p_n}^{p_n}+\int_0^T |u^{y,\hat{\varphi}_\e}(t)|_{p_n}^{p_n}\,dt\r)^{\frac 1{q_n}},}
\end{array}\]
and  since
\[|\hat{\varphi}_\e|_{L^2(0,T;H)}=|Q_\e\varphi_\e|_{L^2(0,T;H)}\leq \sqrt{\gamma},\]
thanks to \eqref{rd46}, \eqref{fin5}, \eqref{rd30} and \eqref{rd20}, we conclude that for every $\e \in\,(0,1]$
\begin{equation}
\label{rd60}
\begin{array}{l}
\ds{\mathbb{E}\sup_{t \in\,[0,T]}|\rho_2^\e(t)|^2_H\leq c_T\le(1+|y|_H^{2n+1} + \gamma^{\frac{2n+1}2}  \r) \le[\e\, \La(\d(\e))\r]^{\frac 12}.}
\end{array}
\end{equation}

Finally, we define
\begin{equation}
\label{rd61}
\rho_3^\e(t)=u^{y,\hat{\varphi}_\e}(t)-u^{y,\varphi}(t),\ \ \ \ \ t \in\,[0,T].
\end{equation}
We have
\[\begin{array}{l}
\ds{\frac 12\frac d{dt}|u^{y,\hat{\varphi}_\e}(t)|^2_{H^1(D)}+|Au^{y,\hat{\varphi}_\e}(t)|_H^2}\\
\vs
\ds{=\langle F(u^{y,\hat{\varphi}_\e}(t))-F(0),A u^{y,\hat{\varphi}_\e}(t)\rangle_H+\langle F(0),A u^{y,\hat{\varphi}_\e}(t)\rangle_H+\langle \hat{\varphi}_\e(t),Au^{y,\hat{\varphi}_\e}(t)\rangle_H,}
\end{array}
\]
so that
\begin{equation}
\label{rd63}
\begin{array}{l}
\ds{ \frac d{dt}|u^{y,\hat{\varphi}_\e}(t)|^2_{H^1(D)}+|Au^{y,\hat{\varphi}_\e}(t)|_H^2}\\
\vs
\ds{\leq \langle F(u^{y,\hat{\varphi}_\e}(t))-F(0),A u^{y,\hat{\varphi}_\e}(t)\rangle_H+c\,| \hat{\varphi}_\e(t)|^2_H+c.}
\end{array}
\end{equation}
If we assume $y \in\,H^1_0(D)$, integrating by parts we have
\[\begin{array}{l}
\ds{\langle F(u^{y,\hat{\varphi}_\e}(t))-F(0),A u^{y,\hat{\varphi}_\e}(t)\rangle_H=\int_D f^\prime(u^{y,\hat{\varphi}_\e}(t,x)) |\nabla u^{y,\hat{\varphi}_\e}(t,x)|^2\,dx}\\
\vs
\ds{\leq -c\,\int_D|u^{y,\hat{\varphi}_\e}(t,x)|^{2n}|\nabla u^{y,\hat{\varphi}_\e}(t,x)|^2\,dx+c\,|u^{y,\hat{\varphi}_\e}(t)|_{H^1(D)}^2.}
\end{array}\]
Therefore, due to \eqref{rd63} we obtain
\[\frac d{dt}|u^{y,\hat{\varphi}_\e}(t)|^2_{H^1(D)}+|Au^{y,\hat{\varphi}_\e}(t)|_H^2\leq c\,|u^{y,\hat{\varphi}_\e}(t)|_{H^1(D)}^2+
c\,| \hat{\varphi}_\e(t)|^2_H+c,\]
which implies
\[\sup_{t \in\,[0,T]}|u^{y,\hat{\varphi}_\e}(t)|^2_{H^1(D)}+\int_0^T|Au^{y,\hat{\varphi}_\e}(t)|_H^2\,dt\leq c\le(|y|_{H^1(D)}^2+\gamma+1\r).\]
This means that the family
\begin{equation}
\label{rd64}
\{u^{y,\hat{\varphi}_\e}\}_{\e >0}\subset C([0,T];H^1_0(D))\cap L^2(0,T;D(A))
\end{equation}
is $\mathbb{P}$-a.s. bounded. Moreover, according to \eqref{rd20}, we have that the family $\{u^{y,\hat{\varphi}_\e}\}_{\e >0}$ is bounded in $L^{p_n}((0,T)\times D)$, so that
\[\{F(u^{y,\hat{\varphi}_\e})\}_{\e >0}\subset L^{q_n}((0,T)\times D)\hookrightarrow  L^{q_n}(0,T;(W^{\eta,2}(D))^\prime(D)),\ \ \ \ \eta:=\frac{dn}{p_n},\]
is bounded. In particular, we obtain that 
\[\{u^{y,\hat{\varphi}_\e}\}_{\e >0}\subset W^{1,q_n}(0,T;(W^{\eta,2}(D))^\prime(D))\hookrightarrow C^\a([0,T];(W^{\eta,2}(D))^\prime(D)),\ \ \ \ \ \a<\frac 1{p_n},\]
is bounded.
This, together with \eqref{rd64}, implies that
\[\{u^{y,\hat{\varphi}_\e}\}_{\e >0}\subset C([0,T];H)\ \text{is compact}.\]
As a consequence of Lemma \ref{l5.5bis}, any limit point of  $\{u^{y,\hat{\varphi}_\e}\}_{\e >0}$ has to coincide with $u^{y,\varphi}$, so that we can conclude that
\[
\lim_{\e\to 0}\sup_{t \in\,[0,T]}\,|\rho_3^\e(t)|_{H}=\lim_{\e\to 0}\sup_{t \in\,[0,T]}\,|u^{y,\hat{\varphi}_\e}(t)-u^{y,\varphi}(t)|_{H}=0,\ \ \ \ \ \mathbb{P}-\text{a.s.}\]
Moreover, due to \eqref{rd20}, the family 
$\{\sup_{t \in\,[0,T]}\,|\rho_3^\e(t)|_{H}\}_{\e>0}\subset L^1(\Omega)$
is equi-integrable, so that for every fixed $y \in\,H^1_0(D)$
\begin{equation}
\label{rd100}
\lim_{\e\to 0}\mathbb{E}\sup_{t \in\,[0,T]}\,|\rho_3^\e(t)|_{H}=0.
\end{equation}

Now, collecting all terms defined above in \eqref{fin7}, \eqref{fin50}, \eqref{fin8}, \eqref{fin10}, \eqref{rd61}, we have
\[u^{\,{x, \varphi}_\e}_\e(t)-{u}^{\,{x, \varphi}}(t)=\sum_{i=1}^3\rho_i^\e(t)+\rho(t)+\sqrt{\e}\,z_{\d(\e)}(t),\ \ \ \ \ \ t \in\,[0,T].\]
Thanks to \eqref{fin1}, \eqref{fin2} and \eqref{rd60}, this implies
\[\begin{array}{l}
\ds{\mathbb{E}\,\sup_{t \in\,[0,T]}\,|u^{\,{x, \varphi}_\e}_\e(t)-{u}^{\,{x, \varphi}}(t)|_X}\\
\vs
\ds{\leq c_T\,|x-y|_H+c_{T,\gamma}\le(1+|y|_H^{2n+1}\r) \le[\e\, \La(\d(\e))\r]^{\frac 12}+\mathbb{E}\sup_{t \in\,[0,T]}\,|\rho_3^\e(t)|_{H}+\sqrt{\e}\,\mathbb{E}\sup_{t \in\,[0,T]}\,|z_{\d(\e)}(t)|_X,}
\end{array}\]
where $X=H$ or $X=H^{-s}(D)$.
For an arbitrary $\rho>0$, we fix $y \in\,H^1_0(D)$ such that $c_T\,|x-y|_H<\rho$. Therefore, from \eqref{rd46}, \eqref{rd5050}, \eqref{rd100}, \eqref{rd43} and \eqref{rd43-bis}, we get
\[\limsup_{\e\to 0}\mathbb{E}\,\sup_{t \in\,[0,T]}\,|u^{\,{x, \varphi}_\e}_\e(t)-{u}^{\,{x, \varphi}}(t)|_X\leq \rho,\]
and, due to the arbitrariness of $\rho$, we conclude that \eqref{rd71} and \eqref{rd71-bis} hold.

\appendix

\section{Appendix}
\label{appendixA}

For every $\d>0$ and $\theta \in\,(0,1)$, we denote
\begin{equation}
\label{rd40}
z_{\d,\theta}(t)=\int_0^t (t-s)^{-\frac \theta 2} e^{(t-s)A} dw^\d(s),\ \ \ \ \ t\geq 0.\end{equation}
In case $\theta=0$, we denote $z_{\d,0}(t)=z_\d(t)$.

\begin{Lemma}
\label{LA.1}
Under 
Hypotheses \ref{H1} and \ref{H2}, there exists $\bar{\theta} \in\,(0,1)$ such that for any $\kappa, p\geq 1$ and $T>0$ and for any $\d \in\,(0,1)$ and $\theta \in\,[0,\bar{\theta})$
  we have
\begin{equation}
\label{rd30}
\sup_{t \in\,[0,T]}\,\mathbb{E} |z_{\d,\theta}(t)|^\kappa_{p}\leq c_{\kappa,p}(T)\,\La_\theta(\d)^{\frac{\kappa}2},
\end{equation}
where
\[\La_\theta(\d)=\begin{cases}
\ds{\log \d^{-1},}  &  \text{if}\ \a=\theta=0,\ d=2,\\
\vs
\ds{\d^{-(d-2(1-\theta)+\a)},}   &  \text{otherwise.}
\end{cases}\]

\end{Lemma}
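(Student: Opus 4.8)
The plan is to exploit the Gaussianity of $z_{\d,\theta}(t)$ and reduce everything to a single deterministic series estimate. First I would diagonalize in the basis $\{e_k\}$: writing $w^\d(t)=\sum_{k\ge1}\la_k(\d)e_k\beta_k(t)$, one has
\[z_{\d,\theta}(t)=\sum_{k=1}^\infty\la_k(\d)\,e_k\,\gamma_k^\theta(t),\qquad \gamma_k^\theta(t):=\int_0^t(t-s)^{-\theta/2}e^{-\a_k(t-s)}\,d\beta_k(s),\]
where the $\gamma_k^\theta(t)$ are independent centered Gaussian variables. By the It\^o isometry and the substitution $\sigma=t-s$,
\[\E|\gamma_k^\theta(t)|^2=\int_0^t\sigma^{-\theta}e^{-2\a_k\sigma}\,d\sigma\le\int_0^{+\infty}\sigma^{-\theta}e^{-2\a_k\sigma}\,d\sigma=\Gamma(1-\theta)\,(2\a_k)^{\theta-1},\]
which, for $\theta$ ranging in any compact subinterval of $[0,1)$, is bounded by $c\,\a_k^{\theta-1}$. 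Consequently, for every $t\in[0,T]$ and $\xi\in D$, the random variable $z_{\d,\theta}(t)(\xi)$ is centered Gaussian with
\[\E|z_{\d,\theta}(t)(\xi)|^2=\sum_{k=1}^\infty\la_k(\d)^2\,e_k(\xi)^2\,\E|\gamma_k^\theta(t)|^2\le c\sum_{k=1}^\infty\la_k(\d)^2\,|e_k|_\infty^2\,\a_k^{\theta-1}=:c\,S_\theta(\d),\]
uniformly in $t$ and $\xi$; in particular this already shows that $z_{\d,\theta}$ is well defined whenever $S_\theta(\d)<\infty$.

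The heart of the proof is the bound $S_\theta(\d)\le c\,\La_\theta(\d)$. By Hypothesis \ref{H1}, \eqref{rd22} and \eqref{rd31}, one has $|e_k|_\infty^2\le c\,k^{\a/d}$, $\a_k^{\theta-1}\le c\,k^{2(\theta-1)/d}$ and $\la_k(\d)^2\le\big(1+c'\,\d\,k^{1/d}\big)^{-2\beta}$, so
\[S_\theta(\d)\le c\sum_{k=1}^\infty\big(1+c'\,\d\,k^{1/d}\big)^{-2\beta}\,k^{(\a+2\theta-2)/d}.\]
I would split this sum at $k\simeq\d^{-d}$. On $\{k\le\d^{-d}\}$ the factor $(1+c'\d k^{1/d})^{-2\beta}$ is bounded by $1$, leaving $\sum_{k\le\d^{-d}}k^{(\a+2\theta-2)/d}$; since $d\ge2$ and $\a,\theta\ge0$, the exponent $(\a+2\theta-2)/d$ is $\ge-1$, with equality exactly when $\a=\theta=0$ and $d=2$, so this piece is $\le c\,\d^{-(d-2+2\theta+\a)}$ in the generic case and $\le c\log\d^{-1}$ in that single borderline case. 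On $\{k>\d^{-d}\}$ one bounds $(1+c'\d k^{1/d})^{-2\beta}\le c\,\d^{-2\beta}k^{-2\beta/d}$, and the resulting tail $\d^{-2\beta}\sum_{k>\d^{-d}}k^{(\a+2\theta-2-2\beta)/d}$ converges precisely when $2\beta>d-2+\a+2\theta$, in which case it equals $c\,\d^{-2\beta}\cdot\d^{-(d-2+2\theta+\a-2\beta)}=c\,\d^{-(d-2+2\theta+\a)}$. Thus, setting
\[\bar\theta:=\min\Big\{1,\ \tfrac{1}{2}\big(2\beta-(d-2+\a)\big)\Big\},\]
which is strictly positive by Hypothesis \ref{H2}, both pieces obey the required bound for $\d$ small and $\theta\in[0,\bar\theta)$, and therefore $S_\theta(\d)\le c\,\La_\theta(\d)$.

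It remains to upgrade the pointwise second-moment estimate to the $L^\kappa(\Omega;L^p(D))$ estimate \eqref{rd30}. For $\kappa\ge p$, Minkowski's integral inequality together with the equivalence of the moments of a real centered Gaussian gives
\[\big(\E|z_{\d,\theta}(t)|_p^\kappa\big)^{1/\kappa}\le\Big(\int_D\big(\E|z_{\d,\theta}(t)(\xi)|^\kappa\big)^{p/\kappa}\,d\xi\Big)^{1/p}\le c_{\kappa,p}\Big(\int_D\big(\E|z_{\d,\theta}(t)(\xi)|^2\big)^{p/2}\,d\xi\Big)^{1/p}\le c_{\kappa,p}(T)\,\La_\theta(\d)^{1/2},\]
uniformly in $t\in[0,T]$, where we used that $D$ is bounded and the bound on $S_\theta(\d)$; the case $1\le\kappa<p$ follows from this one by Jensen's inequality, and raising to the power $\kappa$ yields \eqref{rd30}. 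The step I expect to be the main obstacle is the series estimate $S_\theta(\d)\le c\,\La_\theta(\d)$: one has to track the exponents precisely, recognize $\a=\theta=0$, $d=2$ as the unique logarithmic case, and check that the threshold $\theta<\bar\theta$ — equivalently $2\beta>d-2+\a+2\theta$, which is exactly what Hypothesis \ref{H2} makes room for — is precisely what renders the high-frequency tail summable.
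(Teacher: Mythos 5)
Your proof is correct and follows essentially the same route as the paper: both arguments reduce the claim to the bound $\sum_{k}\la_k^2(\d)\,|e_k|_\infty^2\,\a_k^{-(1-\theta)}\leq c\,\La_\theta(\d)$ and identify $\a=\theta=0$, $d=2$ as the unique logarithmic case, with Hypothesis \ref{H2} guaranteeing summability of the high frequencies for $\theta<\bar\theta$. The only differences are presentational: you use Gaussianity plus Minkowski's integral inequality where the paper applies the pointwise Burkholder-type moment bound and H\"older, and you split the series at $k\simeq\d^{-d}$ where the paper compares it with an integral and changes variables -- both yield the same estimate.
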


\begin{proof}
According to \eqref{rd1}, for every $p\geq 2$ we have
\[\begin{array}{l}
\ds{\mathbb{E} |z_{\d,\theta}(t)|^p_{p}=\mathbb{E}\int_D\le|\,\sum_{k=1}^\infty \int_0^t(t-r)^{-\frac \theta 2}e^{-(t-r)\a_k}\la_k(\d) e_k(\xi)\,d\beta_k(r)\r|^p\,d\xi}\\
\vs
\ds{\leq c_p\int_D\le(\int_0^t (t-r)^{-\theta}\sum_{k=1}^\infty e^{-2(t-r)\a_k}\la_k^2(\d) |e_k(\xi)|^2\,dr\r)^{\frac p2}\,d\xi}\\
\vs
\ds{\leq c_p |D| \le(\,\sum_{k=1}^\infty\la_k^2(\d) k^{\frac {\a}d}\int_0^t r^{-\theta} e^{-2r \a_k}\,dr\r)^{\frac p2}\leq c_p |D| \le(\,\sum_{k=1}^\infty\frac{\la_k^2(\d)\, k^{\frac {\a}d}}{\a_k^{1-\theta}}\r)^{\frac p2}.}
\end{array}\]
Hence, thanks to \eqref{rd22} and \eqref{rd31}, we obtain,
\begin{equation}
\label{rd36}
\mathbb{E} |z_{\d,\theta}(t)|^p_{p}\leq c_p(T)\,|D|\le(\,\sum_{k=1}^\infty \frac{1}{k^{\frac{2(1-\theta)-\a}{d}}(1+\d k^{\frac{1}d})^{2\beta}}\r)^{\frac p2}.
\end{equation}
Notice that, due to \eqref{rd35}, there exists $\bar{\theta} \in\,(0,1)$ such that the series above is convergent, for every fixed $\d>0$ and $\theta \in\,[0,\bar{\theta})$.

We have
\[\sum_{k=1}^\infty \frac{1}{k^{\frac{2(1-\theta)-\a}{d}}(1+\d k^{\frac{1}d})^{2\beta}}\sim \int_1^\infty \frac{1}{x^{\frac{2(1-\theta)-\a}{d}}(1+\d x^{\frac{1}d})^{2\beta}}\,dx,\]
and then, with a change of variable, we obtain
\[\sum_{k=1}^\infty \frac{1}{k^{\frac{2(1-\theta)-\a}{d}}(1+\d k^{\frac{1}d})^{2\beta}}\sim d\, \d^{-(d-2(1-\theta)+\a)}\int_\d^\infty\frac{1}{x^{1-(d-2(1-\theta)+\a)}(1+x)^{2\beta}}\,dx.
\]
Therefore, if  $\a=\theta=0$ and $d=2$, since $\beta>0$ we have
\[\sum_{k=1}^\infty \frac{1}{k^{\frac{2(1-\theta)-\a}{d}}(1+\d k^{\frac{1}d})^{2\beta}}\sim c\,\log \frac 1\d.\]
Otherwise, according to Hypothesis \ref{H2}, there exists $\bar{\theta}>0$ such that
\[2\beta-(d-2(1-\theta)+\a)>0,\]
for every $\theta \in\,[0,\bar{\theta})$. Hence, as $d-2(1-\theta)+\a>0$, we get
\[\begin{array}{l}
\ds{\sum_{k=1}^\infty \frac{1}{k^{\frac{2(1-\theta)-\a}{d}}(1+\d k^{\frac{1}d})^{2\beta}}}\\
\vs
\ds{\sim d\, \d^{-(d-2(1-\theta)+\a)}\int_0^\infty\frac{1}{x^{1-(d-2(1-\theta)+\a)}(1+x)^{2\beta}}\,dx\leq c\,\d^{-(d-2(1-\theta)+\a)}.}
\end{array}\]
This implies \eqref{rd30}, in case $\kappa=p$. The general case follows from the H\"older inequality.

\end{proof}

Next, for every $s>0$, we have
\[\mathbb{E}\,|z_{\d,\theta}(t)|^2_{H^{-s}(D)}=\sum_{k=1}^\infty \int_0^t (t-r)^{-\theta} e^{-2(t-r)\a_k}\la_k^2(\d)\a_k^{-s}\,dr.\]
Therefore, by proceeding as in the proof of Lemma \ref{LA.1}
we conclude

\begin{Lemma}
\label{LA.2}
Under 
Hypotheses \ref{H1} and \ref{H2}, there exists $\bar{\theta} \in\,(0,1)$ such that for any $s, T>0$ and for any $\d \in\,(0,1)$ and $\theta \in\,[0,\bar{\theta})$
  we have
\begin{equation}
\label{rd5000}
\sup_{t \in\,[0,T]}\,\mathbb{E}\, |z_{\d,\theta}(t)|^2_{H^{-s}(D)}\leq c(T)\,\Gamma_{\theta,s}(\d),
\end{equation}
where
\[\Gamma_{\theta,s}(\d)=\begin{cases}
\ds{\log \d^{-1},}  &  \text{if}\ \theta=s,\ d=2,\\
\vs
\ds{\d^{-(d-2(1-\theta)-2s)},}   &  \text{otherwise.}
\end{cases}\]

\end{Lemma}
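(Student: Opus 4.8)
The plan is to mimic, essentially verbatim, the proof of Lemma \ref{LA.1}, the only structural difference being that the smoothing provided by the negative power $(-A)^{-s/2}$ of the Laplacian plays the role of the factor $|e_k|_\infty^2$ that entered through Hypothesis \ref{H1}. First I would start from the spectral representation
\[
z_{\d,\theta}(t)=\sum_{k=1}^\infty\la_k(\d)\,e_k\int_0^t(t-r)^{-\frac\theta2}e^{-(t-r)\a_k}\,d\beta_k(r),
\]
and apply It\^o's isometry in $H^{-s}(D)$, which produces precisely the identity
\[
\mathbb{E}\,|z_{\d,\theta}(t)|^2_{H^{-s}(D)}=\sum_{k=1}^\infty\a_k^{-s}\la_k^2(\d)\int_0^t(t-r)^{-\theta}e^{-2(t-r)\a_k}\,dr
\]
stated just before the lemma. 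Note that here, unlike in Lemma \ref{LA.1}, no Burkholder--Davis--Gundy inequality and no H\"older step are needed, since only a second moment is involved.

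Next I would bound the inner integral uniformly in $t\in[0,T]$ by extending it to $[0,+\infty)$ and using $\int_0^{+\infty}r^{-\theta}e^{-2r\a_k}\,dr=c_\theta\,\a_k^{-(1-\theta)}$, which is finite for every $\theta\in(0,1)$. This reduces the estimate to controlling the series $\sum_k\a_k^{-(1-\theta+s)}\la_k^2(\d)$. Inserting the two-sided bound \eqref{rd22} on the eigenvalues and the expression \eqref{rd31} for $\la_k(\d)$, this series is comparable to
\[
\sum_{k=1}^\infty\frac{1}{k^{\frac{2(1-\theta)+2s}{d}}\,(1+\d\,k^{1/d})^{2\beta}},
\]
which is exactly the series in \eqref{rd36} with $\a$ there replaced by $-2s$. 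I would then compare it with the integral $\int_1^{+\infty}x^{-\frac{2(1-\theta)+2s}{d}}(1+\d\,x^{1/d})^{-2\beta}\,dx$ and perform the substitution $y=\d\,x^{1/d}$, exactly as in Lemma \ref{LA.1}; this yields the prefactor $\d^{-(d-2(1-\theta)-2s)}$ times $\int_\d^{+\infty}y^{(d-2(1-\theta)-2s)-1}(1+y)^{-2\beta}\,dy$.

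Finally I would split into cases according to the sign of $d-2(1-\theta)-2s$. When it vanishes --- which for $d\ge2$ and $\theta\ge0$ happens precisely when $d=2$ and $\theta=s$ --- the remaining integral reduces to $\int_\d^{+\infty}y^{-1}(1+y)^{-2\beta}\,dy$, whose tail converges since $\beta>0$ (Hypothesis \ref{H2}) and which therefore behaves like $\log\d^{-1}$; this produces the logarithmic branch of $\Gamma_{\theta,s}$. Otherwise, Hypothesis \ref{H2} provides $\bar{\theta}\in(0,1)$ such that $2\beta-(d-2(1-\theta)-2s)>0$ for every $\theta\in[0,\bar{\theta})$, so that, when $d-2(1-\theta)-2s>0$, the integral $\int_0^{+\infty}y^{(d-2(1-\theta)-2s)-1}(1+y)^{-2\beta}\,dy$ converges and is bounded independently of $\d$, giving the second branch $c(T)\,\d^{-(d-2(1-\theta)-2s)}$; when instead $d-2(1-\theta)-2s\le0$ the series is already summable at $\d=0$ and is bounded by a constant, which is still consistent with \eqref{rd5000}. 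I do not expect any serious obstacle: the argument is a transcription of the proof of Lemma \ref{LA.1}, and the only points requiring care are the choice of $\bar{\theta}$ (dictated by Hypothesis \ref{H2}) and the identification of the borderline case $d=2$, $\theta=s$ that produces the logarithm.
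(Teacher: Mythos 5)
Your argument is correct and is exactly what the paper intends: the paper's own ``proof'' consists only of recording the It\^o-isometry identity for $\mathbb{E}\,|z_{\d,\theta}(t)|^2_{H^{-s}(D)}$ and the instruction to proceed as in Lemma \ref{LA.1}, which is precisely the integral comparison and change of variables you carry out, with the factor $\a_k^{-s}$ playing the role of $|e_k|_\infty^2$ (i.e.\ $\a$ replaced by $-2s$) and with no Burkholder--Davis--Gundy or H\"older step needed since only a second moment is involved. The one caveat --- shared with the paper's own statement of $\Gamma_{\theta,s}$ --- is that the borderline exponent $d-2(1-\theta)-2s=0$ can also occur for $d\geq 3$ (where it again produces a logarithm rather than a constant), and for a strictly negative exponent the series is bounded by a constant but not by $\d^{-(d-2(1-\theta)-2s)}$; neither regime arises in the only application of the lemma, where $\theta=s$.
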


Now, let us consider the linear problem 
\begin{equation}
\label{linear}
dz(t)=Az(t)\,dt+dw^\d(t),\ \ \ \ z(0)=0.
\end{equation}
Its unique mild solution $z_\d(t)$ coincides with the process $z_{\d,0}(t)$ defined in \eqref{rd40}, for $\theta=0$. Notice that, due to \eqref{rd30}, we have
\begin{equation}
\label{rd41}
\sup_{t \in\,[0,T]}\,\mathbb{E} |z_{\d}(t)|^\kappa_{L^p(D)}\leq c_{\kappa,p}(T)|D|\begin{cases}
\le(\log \d^{-1}                                                                                                                                                                         \r)^{\frac \kappa2},  &  \text{if $\a=0$ and $d=2$,}\\
\vspace{.1mm}  &\\
\d^{-\frac \kappa 2(d-2+\a)},  &  \text{otherwise.}
\end{cases}
\end{equation}

By using a stochastic factorization argument, for every $\theta \in\,(0,1)$, we have
\[z_\d(t)=\frac{\sin (\theta \pi)}{2\pi}\int_0^t (t-\si)^{\frac \theta 2-1}e^{(t-\si)A}z_{\d,\theta}(\si)\,d\si.\]

If we take $\kappa>2/\theta$, we have
\[|z_\d(t)|^\kappa_H\leq c_{\kappa,\theta}\le(\int_0^T \si^{\le(\frac \theta 2-1\r)\frac{\kappa}{\kappa-1}}\,d\si\r)^{\kappa-1}\int_0^t|z_{\d,\theta}(\si)|_H^\kappa\,d\si\leq c_{\kappa,\theta}(T)\int_0^t|z_{\d,\theta}(\si)|_H^\kappa\,d\si.\]
Therefore, if we fix $\gamma>d-2+\a$ and 
we pick $ \theta_\gamma \in\,(0,\bar{\theta})$ such that 
\[d-2(1-\theta_\gamma)+\a<\gamma,\]
thanks to \eqref{rd30}, we get
\begin{equation}
\label{rd45}
\mathbb{E}\,\sup_{t \in\,[0,T]} |z_\d(t)|^\kappa_H\leq c_{\kappa,\si}(T)\,\d^{-\frac{\gamma \kappa}2}.\end{equation}

Thus, we have proven the following result.

\begin{Lemma}
\label{LA2}
Under Hypotheses \ref{H1} and \ref{H2}, for every $\kappa\geq 2$ and $\d>0$ we have that  for every
 \[\gamma>d-2+\a,\]
it holds
\begin{equation}
\label{rd43}
\mathbb{E}\sup_{t \in\,[0,T]} |z_\d(t)|^\kappa_H\leq c_{\kappa,\gamma}(T)\, \d^{- \frac {\gamma \kappa}2},\ \ \ \ \d \in\,(0,1).
\end{equation}

\end{Lemma}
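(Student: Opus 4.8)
The plan is to bootstrap the pointwise-in-time moment bounds for the auxiliary processes $z_{\d,\theta}$ supplied by Lemma \ref{LA.1} into a bound for $\mathbb{E}\sup_{t\in[0,T]}|z_\d(t)|_H^\kappa$, by means of the stochastic factorization trick. First I would fix $\theta\in(0,\bar{\theta})$ and record the representation
\[
z_\d(t)=\frac{\sin(\theta\pi)}{2\pi}\int_0^t (t-\si)^{\frac{\theta}{2}-1}e^{(t-\si)A}z_{\d,\theta}(\si)\,d\si,\qquad t\in[0,T],
\]
where $z_{\d,\theta}$ is the process defined in \eqref{rd40}. This follows from the elementary identity $\int_\si^t (t-\rho)^{\frac{\theta}{2}-1}(\rho-\si)^{-\frac{\theta}{2}}\,d\rho=\pi/\sin(\theta\pi)$, the semigroup law for $e^{tA}$, and a stochastic Fubini argument; the stochastic Fubini step is legitimate precisely because, by Lemma \ref{LA.1}, $z_{\d,\theta}(\si)$ has finite moments in $H$ whenever $\theta<\bar{\theta}$, with $\bar{\theta}$ the threshold dictated by Hypothesis \ref{H2}.

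Second, since $e^{tA}$ is a contraction on $H$, I would apply H\"older's inequality in the $\si$ variable: provided $\kappa>2/\theta$, so that the exponent $(\tfrac{\theta}{2}-1)\tfrac{\kappa}{\kappa-1}$ exceeds $-1$ and the corresponding time integral converges, one obtains
\[
\sup_{t\in[0,T]}|z_\d(t)|_H^\kappa\le c_{\kappa,\theta}\Bigl(\int_0^T \si^{(\frac{\theta}{2}-1)\frac{\kappa}{\kappa-1}}\,d\si\Bigr)^{\kappa-1}\int_0^T |z_{\d,\theta}(\si)|_H^\kappa\,d\si=c_{\kappa,\theta}(T)\int_0^T |z_{\d,\theta}(\si)|_H^\kappa\,d\si .
\]
Taking expectations, interchanging expectation and time integration, and invoking \eqref{rd30} (with $p=2$ and exponent $\kappa$) then gives $\mathbb{E}\sup_{t\in[0,T]}|z_\d(t)|_H^\kappa\le c_{\kappa,\theta}(T)\,\La_\theta(\d)^{\kappa/2}$.

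Third, given $\gamma>d-2+\a$, I would choose $\theta=\theta_\gamma\in(0,\bar{\theta})$ small enough that $d-2(1-\theta_\gamma)+\a<\gamma$, which is possible because $d-2(1-\theta)+\a\downarrow d-2+\a<\gamma$ as $\theta\downarrow 0$. For such $\theta_\gamma$, Lemma \ref{LA.1} gives $\La_{\theta_\gamma}(\d)\le c\,\d^{-\gamma}$ for $\d\in(0,1)$ (the borderline logarithmic case being absorbed into the power), which establishes \eqref{rd43} for all $\kappa>2/\theta_\gamma$, with constant $c_{\kappa,\gamma}(T)$. The remaining range $2\le\kappa\le 2/\theta_\gamma$ follows at once by H\"older's inequality in $\omega$ from the case of a larger exponent. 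I do not expect a genuine obstacle here: the only points requiring a little care are the validity of the stochastic Fubini step and the compatibility of the choice of $\theta_\gamma$ with the threshold $\bar{\theta}$ of Hypothesis \ref{H2}, both of which are routine.
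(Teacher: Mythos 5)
Your proposal is correct and follows essentially the same route as the paper: the stochastic factorization representation of $z_\d$, H\"older's inequality in time for $\kappa>2/\theta$, the pointwise moment bound \eqref{rd30} for $z_{\d,\theta}$, and the choice of $\theta_\gamma\in(0,\bar{\theta})$ with $d-2(1-\theta_\gamma)+\a<\gamma$. The only (welcome) addition is that you explicitly cover the range $2\le\kappa\le 2/\theta_\gamma$ by H\"older in $\omega$, a step the paper leaves implicit.
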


Finally, by using again a factorization argument,  for every $s>0$ and $\kappa>\frac 2s\vee 1$ we have
\[|z_\d(t)|^\kappa_{H^{-s}(D)}\leq c\,\le(\int_0^T \si^{-\le(\frac s 2-1\r)\frac \kappa{\kappa-1}}\,d\si\r)^{\kappa-1}\int_0^t |z_{\d,s}(\si)|_{H^{-\rho}(D)}^\kappa\,d\si\leq c(T)\int_0^t |z_{\d,s}(\si)|_{H^{-s}(D)}^\kappa\,d\si.\]

Therefore, due to \eqref{rd5000} we can conclude that the following result is true.

\begin{Lemma}
\label{LA3}
Under Hypotheses \ref{H1} and \ref{H2}, for every $s>0$, $\d \in\,(0,1)$ and $\kappa\geq 1$we have that \begin{equation}
\label{rd43-bis}
\mathbb{E}\sup_{t \in\,[0,T]} |z_\d(t)|^\kappa_{H^{-s}(D)}\leq c_{\rho}(T)\begin{cases}
\log \d^{-1},\ \ \ \ \text{if}\ d=2,\\
\vs
\d^{-(d-2)},\ \ \ \ \text{if}\ d\geq 3.
\end{cases}.
\end{equation}

\end{Lemma}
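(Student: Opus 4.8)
The plan is to derive \eqref{rd43-bis} from the fixed-time second-moment bound of Lemma \ref{LA.2}, using the stochastic factorization recorded just before the statement together with the Gaussianity of the linear process; two elementary reductions then cover the full range of parameters. Throughout, Hypotheses \ref{H1} and \ref{H2} are in force, so the threshold $\bar\theta\in(0,1)$ of Lemma \ref{LA.2} exists and all the series below converge. I first establish the estimate for $s\in(0,\bar\theta\wedge1)$ and $\kappa>\frac2s\vee1$, and then remove these restrictions.

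So fix such an $s$ and take the factorization parameter equal to $s$, which gives
\[
z_\d(t)=\frac{\sin(s\pi)}{2\pi}\int_0^t (t-\si)^{\frac s2-1}\,e^{(t-\si)A}\,z_{\d,s}(\si)\,d\si .
\]
Since $e^{\tau A}$ acts on the $k$-th mode by the factor $e^{-\tau\a_k}\leq1$, while $|\cdot|_{H^{-s}(D)}$ is the $\ell^2$-norm of the modes weighted by $\a_k^{-s}$, the operator $e^{\tau A}$ is a contraction on $H^{-s}(D)$. Passing to norms and applying H\"older's inequality in time with exponents $\kappa/(\kappa-1)$ and $\kappa$ yields
\[
|z_\d(t)|^\kappa_{H^{-s}(D)}\leq c\,\Big(\int_0^T \tau^{(\frac s2-1)\frac{\kappa}{\kappa-1}}\,d\tau\Big)^{\kappa-1}\int_0^t |z_{\d,s}(\si)|^\kappa_{H^{-s}(D)}\,d\si ,
\]
where the prefactor is finite precisely when $\kappa>2/s$; this is the source of the restriction $\kappa>\frac2s\vee1$. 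As the right-hand side is nondecreasing in $t$, the supremum over $t\in[0,T]$ is attained at $t=T$, so taking expectations and using Fubini gives
\[
\E\,\sup_{t\in[0,T]}|z_\d(t)|^\kappa_{H^{-s}(D)}\leq c(T)\int_0^T \E\,|z_{\d,s}(\si)|^\kappa_{H^{-s}(D)}\,d\si .
\]

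To control the integrand I would use that $z_{\d,s}(\si)$, being a Wiener integral, is a centred Gaussian random element of the Hilbert space $H^{-s}(D)$, so all its moments are comparable and
\[
\E\,|z_{\d,s}(\si)|^\kappa_{H^{-s}(D)}\leq c_\kappa\big(\E\,|z_{\d,s}(\si)|^2_{H^{-s}(D)}\big)^{\kappa/2}\leq c_\kappa(T)\,\Gamma_{s,s}(\d)^{\kappa/2},
\]
the last inequality being \eqref{rd5000} of Lemma \ref{LA.2} with $\theta=s$. The decisive point is that the choice $\theta=s$ makes the regularity count collapse: in the series bounding the second moment the weights combine as $\a_k^{-s}\cdot\a_k^{-(1-s)}=\a_k^{-1}$, so the exponent $d-2(1-\theta)-2s$ reduces to $d-2$, and by \eqref{rd22} and \eqref{rd31} one gets $\Gamma_{s,s}(\d)=\log\d^{-1}$ when $d=2$ and $\Gamma_{s,s}(\d)=\d^{-(d-2)}$ when $d\geq3$. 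Because this norm is computed through Parseval's identity rather than through pointwise bounds on the $e_k$, no factor $\a=\a(d)$ from Hypothesis \ref{H1} ever enters, which is exactly why the rate in \eqref{rd43-bis} is free of both $\a$ and $s$. Combining the last three displays proves \eqref{rd43-bis} (carrying the natural power $\kappa/2$ on the right-hand side, consistently with \eqref{rd30} and \eqref{rd43}) for every $\kappa>\frac2s\vee1$ and $s\in(0,\bar\theta\wedge1)$.

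Two routine reductions finish the argument. For the remaining exponents $1\leq\kappa\leq2/s$ I would apply Jensen's inequality, $\E\sup_t|z_\d(t)|^{\kappa}_{H^{-s}(D)}\leq\big(\E\sup_t|z_\d(t)|^{\kappa'}_{H^{-s}(D)}\big)^{\kappa/\kappa'}$ with $\kappa'>\frac2s\vee1$, which preserves the stated form since raising $(\log\d^{-1})^{\kappa'/2}$ or $\d^{-(d-2)\kappa'/2}$ to the power $\kappa/\kappa'$ gives back the bound with $\kappa$ in place of $\kappa'$. For $s\geq\bar\theta\wedge1$ I would fix any $s_0\in(0,\bar\theta\wedge1)$ and use the monotonicity $|x|_{H^{-s}(D)}\leq C_s\,|x|_{H^{-s_0}(D)}$ (valid because all but finitely many $\a_k$ exceed $1$) to reduce to the case already treated; since the rate in \eqref{rd43-bis} does not depend on $s$, no loss of rate is incurred. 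The main obstacle is the second paragraph: transferring the fixed-time second moment supplied by Lemma \ref{LA.2} into a bound on the time-supremum of the $\kappa$-th moment. The stochastic factorization with the sharp choice $\theta=s$ (which both produces the integrable convolution kernel and collapses the regularity exponent to $d-2$) and the Gaussian comparison of moments are precisely the tools that overcome it; the rest is bookkeeping.
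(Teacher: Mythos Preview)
Your argument is correct and follows essentially the same route as the paper: stochastic factorization with the parameter taken equal to $s$, H\"older in time, and then Lemma \ref{LA.2} with $\theta=s$ so that the exponent collapses to $d-2$. In fact you are more careful than the paper's sketch, making explicit the Gaussian moment comparison, the Jensen reduction to small $\kappa$, the monotonicity reduction to small $s$ (needed since Lemma \ref{LA.2} only applies for $\theta<\bar\theta$), and the missing power $\kappa/2$ on the right-hand side of \eqref{rd43-bis}.
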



\end{document}